\documentclass{amsart}

\usepackage{amsthm,amssymb,amsmath,a4wide,graphicx,tikz,bm,bbm,hyperref}
\usepackage[english]{babel}
\usepackage{xcolor}
\usepackage{subfigure}
\usepackage[normalem]{ulem}
\numberwithin{equation}{section}
\definecolor{lime}{HTML}{A6CE39}

\newtheorem{prop}{Proposition}[section]
\newtheorem{theorem}{Theorem}[section]
\newtheorem{lemma}{Lemma}[section]

\newtheorem{remark}{Remark}[section]
\theoremstyle{definition}

\newtheorem{ex}{Example}[section]
\definecolor{awesome}{rgb}{1.0, 0.13, 0.32}

\title[Birkhoff spectrum for self-affine sets and digit frequencies for GLS systems]{Birkhoff spectrum for diagonally self-affine sets and digit frequencies for GLS systems with redundancy}

\author[J. Imbierski]{Jonny Imbierski}
\address[J. Imbierski]{Mathematical Institute, Leiden University, PO Box 9512, 2300 RA Leiden, The Netherlands}
\email{imbierskijf@math.leidenuniv.nl}

\author[C. Kalle]{Charlene Kalle}
\address[C. Kalle]{Mathematical Institute, Leiden University, PO Box 9512, 2300 RA Leiden, The Netherlands}
\email{kallecccj@math.leidenuniv.nl}

\author[R. Mohammadpour]{Reza Mohammadpour}
\address[R. Mohammadpour]{Department of Mathematics, Uppsala University, Box 480, SE-75106, Uppsala, Sweden}
\email{reza.mohammadpour@math.uu.se}

\date{Version of \today}

\begin{document}

\begin{abstract}
In this article, we calculate the Birkhoff spectrum in terms of the Hausdorff dimension of level sets for Birkhoff averages of continuous potentials for a certain family of diagonally affine IFS's. Also, we study Besicovitch-Eggleston sets for finite GLS number systems with redundancy. The redundancy refers to the fact that each number $x \in [0,1]$ has uncountably many expansions in the system. We determine the Hausdorff dimension of digit frequency sets for such expansions along fibres.
\end{abstract}

\subjclass[2020]{11K55, 28A80, 37D35}
\keywords{Besicovitch-Eggleston sets, GLS expansions, number systems, multifractal analysis}

\maketitle

\section{Introduction}
An {\em iterated function system} (IFS) is a finite collection of contracting maps on a complete metric space.
In this article, we consider IFSs $\{A_u+{\bm v}_u\}_{u\in\mathcal U}$ with $\mathcal U$ a finite set that are given by a collection $(A_u)_{u \in \mathcal U} \in GL_2(\mathbb R)^{\# \mathcal U}$
of matrices of the form
\begin{equation}\label{diag-matrix}
A_u=\left[\begin{array}{cc}
b_u & 0 \\
0 & c_u
\end{array}\right], \quad \text{with }\, 0 <|b_u|, |c_u| <1,  \, \, u \in \mathcal U,
\end{equation}
and a collection $(\bm{v}_u)_{u \in \mathcal U}$ of vectors of the form
\[ \bm{v}_u = \left[\begin{array}{cc}
\beta_u\\
\gamma_u
\end{array}\right] \in \mathbb R^2, \quad u \in \mathcal U.\]


The unique non-empty compact subset $\Lambda \subseteq [0,1]^2$ that satisfies
\begin{equation}\label{q:carpet}
\Lambda = \bigcup_{u \in \mathcal U} (A_u + \bm{v}_u)(\Lambda),
\end{equation}
which exists by Hutchinson's theorem \cite{Hutchinson}, is called the {\em attractor} or {\em self-affine set} of the  diagonally affine IFS $\{ A_u + \bm{v}_u\}_{u \in \mathcal U}$. 

\medskip
We are interested in multifractal properties of certain types of self-affine sets. Let $\sigma: \mathcal U^\mathbb N \to \mathcal U^\mathbb N$ denote the left shift, so $\sigma (\xi_n)_{n \ge 1} = (\xi_{n+1})_{n \ge 1}$.  For any continuous potential $\Phi: \mathcal U^\mathbb N \rightarrow \mathbb R^d$, $d \ge 1$, and for a given  vector $\bm{\alpha} = (\alpha_1, \ldots, \alpha_d) \in \mathbb R^d$, the symbolic level set is given by
\begin{equation}\label{q:symblevel} E_{\Phi}(\bm{\alpha}):=\left\{ \xi \in \mathcal U^\mathbb N \, :\, \lim _{n \rightarrow \infty} \frac{1}{n} \sum_{i=0}^{n-1} \Phi (\sigma^i(\xi))=\bm{\alpha}\right\}.
\end{equation}
For $\xi \in \mathcal U^\mathbb N$ and $n \ge 1$, let $A_{\xi|_{n-1}} = A_{\xi_1} \cdots A_{\xi_{n-1}}$ and define $\pi: \mathcal U^{\mathbb N} \to \Lambda $ by
\begin{equation}\label{q:pi}
\pi(\xi) = \sum_{n \ge 1} A_{\xi|_{n-1}} \bm{v}_{\xi_n}.
\end{equation}
The $\bm{\alpha}$-{\em level set} for $\Phi$ on $\Lambda$ is the set $\pi(E_{\Phi}(\bm{\alpha}))$. There are various known results on the size of level sets, both in terms of Hausdorff dimension and topological entropy (in the sense of \cite{Bowen}), see for instance \cite{BS, FFW, BSS, BSS1, Barrel, KR,  Ree11, Mohammadpour-Lyapunov, FH, Mohmmadpour24}. Multifractal results for self-affine sets in terms of Lyapunov dimensions were obtained in \cite{BJKR21} for  collections of matrices $(A_u)_{u \in \mathcal U}$ under certain strong-irreducibility and proximality conditions. One can find more information about the multifractal formalism in \cite{barreira_gelfert, climenhaga, Mohammadpour-survey}. Here, we consider the Hausdorff dimension of level sets for self-affine sets. 

\medskip
 Let $\mathcal F = \{ f_u: \mathbb R \to \mathbb R \}_{u \in \mathcal U}$ be an IFS of real-valued affine maps of the form $f_u(y) = d_u y + \delta_u$, so $|d_u| < 1$ for each $u \in \mathcal U$. For a sequence $\bm{u}=u_1\cdots u_n \in \mathcal U^n$, $n \ge 1$, we write
\[ f_{\bm{u}} (y)  := f_{u_1} \circ \cdots \circ f_{u_n} (y) = d_{\bm{u}}y + \delta_{\bm{u}}.\]
We say that $\mathcal F$ has {\em exact overlaps} if there are $\bm{u}, \bm{u}' \in \mathcal U^n$ for some $n \ge 1$ such that $f_{\bm{u}}= f_{\bm{u}'}$. For $\bm{u},\bm{u}'  \in \mathcal U^n$, we define the distance
\[\text{dist}(f_{\bm{u}}, f_{\bm{u}'}):=\left\{
  \begin{array}{@{}ll@{}}
    |\delta_{\bm{u}} -\delta_{\bm{u}'}|, & \text{if}\  d_{\bm{u}}= d_{\bm{u}'}; \\
    \infty, & \text{otherwise}.
  \end{array}\right.\]
In his breakthrough result, Hochman \cite{Hochman14} introduced the {\em Exponential Separation Condition} (ESC) to calculate the dimension of self-similar measures. We say that $\mathcal F$ satisfies the Exponential Separation Condition if there exists a constant $c>0$ and infinitely many integers $n \ge 1$ such that for all $\bm{u}, \bm{u}' \in \mathcal U^n$, 
\[ \text{dist}(f_{\bm{u}}, f_{\bm{u}'}) \ge c^n.\]

\medskip
We say that a diagonally affine IFS $\{A_u+{\bm v}_u\}_{u\in\mathcal U}$
 satisfies the {\em Strong Open Set Condition} (SOSC) if there is an open set $V \subseteq \mathbb R^2$ such that all the sets $(A_u + \bm{v}_u)(V)$ are pairwise disjoint, $\bigcup_{u \in \mathcal U} (A_u + \bm{v}_u) (V) \subseteq V$ and $\Lambda \cap V \neq \emptyset$, where $\Lambda$ is as in \eqref{q:carpet}. \medskip

Our first result is for the following class of diagonally affine IFSs. Let $\mathcal D$ be the collection of all IFSs $\{A_u+ \bm{v}_u\}_{u\in \mathcal U}$ with matrices as in \eqref{diag-matrix} that satisfy the SOSC together with either (D) or (D'):
\begin{itemize}
\item[(D)] $|b_u|>|c_u|$ for all $u \in \mathcal U$ and
\begin{itemize}
\item[(a)] the IFS obtained from projecting to the first coordinate $\mathcal G_1:= \{ g_{1,u}(y)= b_u y +\beta_u\}_{u \in \mathcal U}$ satisfies the ESC, or,
\item[(b)] $b_u$ is algebraic for all $u \in \mathcal U$ and the IFS obtained from projecting to the first coordinate $\mathcal G_1:= \{ g_{1,u}(y)= b_u y +\beta_u\}_{u \in \mathcal U}$ has no exact overlaps;
\end{itemize}
\item[(D')] $|b_u|<|c_u|$ for all $u \in \mathcal U$ and
\begin{itemize}
\item[(a)] the IFS obtained from projecting to the second coordinate $\mathcal G_2:= \{ g_{2,u}(y)= c_u y +\gamma_u\}_{u \in \mathcal U}$ satisfies the ESC, or,
\item[(b)]  $c_u$ is algebraic for all $u \in \mathcal U$ and the IFS obtained from projecting to the second coordinate $\mathcal G_2:= \{ g_{2,u}(y)= c_u y +\gamma_u\}_{u \in \mathcal U}$ has no exact overlaps.
\end{itemize}
\end{itemize}
An IFS satisfying condition (D')(a) is shown in Figure~\ref{f:2and3}(a).

\medskip
\noindent Let $\mathcal{M}(\mathcal U^\mathbb N, \sigma)$ denote the set of all $\sigma$-invariant Borel probability measures on $\mathcal U^\mathbb N$ and let
\begin{equation}\label{q:Lphi}
\begin{aligned}
L_{\Phi} & :=\left\{\bm{\alpha} \in \mathbb{R}^d\, :\, \exists\,   \xi \in \mathcal U^\mathbb N  \text { with } \lim _{n \rightarrow \infty} \frac{1}{n} \sum_{i=0}^{n-1} \Phi (\sigma^i \xi)=\bm{\alpha} \right\} \\
& =\left\{\bm{\alpha} \in \mathbb{R}^d\, :\, \exists \,  \mu \in \mathcal{M}(\mathcal U^\mathbb N, \sigma) \text { with } \int_{\mathcal U^\mathbb N} \Phi \, \mathrm{d} \mu=\bm{\alpha} \right\}
\end{aligned}
\end{equation}
be the collection of all vectors $\bm{\alpha}$ for which the corresponding level set is non-empty, known as the {\em spectrum} of $\Phi$. We use $\mathring{L}_{\Phi}$ to denote the interior of $L_\Phi$. For any $\mu \in \mathcal{M}(\mathcal U^\mathbb N, \sigma)$, let $\operatorname{dim}_{\mathrm{L}}(\mu)$ denote the Lyapunov dimension of $\mu$. Let $P$ denote the topological pressure. Our first result is as follows.

\begin{theorem}\label{t:main1}
Let $\left\{A_u+ \bm{v}_u\right\}_{u \in \mathcal U} \in \mathcal D$ and let $\Phi: \mathcal U^\mathbb N \rightarrow \mathbb{R}^d$, $d \ge 1$, be a continuous potential. Then for each $\bm{\alpha} \in \mathring{L}_{\Phi}$,
\[ \begin{aligned}
\operatorname{dim}_{\mathrm{H}}\left(\pi(E_{\Phi}(\bm{\alpha}))\right) & =\sup \bigg\{\operatorname{dim}_{\mathrm{L}}(\mu)\, :\,  \mu \in \mathcal{M}(\mathcal U^\mathbb N, \sigma) \text { and } \int_{\mathcal U^\mathbb N} \Phi\, \mathrm{d} \mu=\bm{\alpha} \bigg\} \\
& =\sup \bigg\{s \geq 0\, :\,  \inf _{q \in \mathbb{R}^d} P\left(\log \varphi^s+\langle q, \Phi-\bm{\alpha}\rangle\right) \geq 0\bigg\}.
\end{aligned}\]
\end{theorem}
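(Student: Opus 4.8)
The plan is to decouple the statement into two tasks: first, the purely thermodynamic identity between the pressure formula and the measure formula, and second, the geometric identity between the measure formula and $\dim_{\mathrm H}(\pi(E_\Phi(\bm{\alpha})))$. For the thermodynamic part I would apply the subadditive variational principle to the submultiplicative potential $\log\varphi^s$ together with the additive potential $\langle q,\Phi-\bm{\alpha}\rangle$, writing
\[ P\big(\log\varphi^s+\langle q,\Phi-\bm{\alpha}\rangle\big)=\sup_{\mu\in\mathcal M(\mathcal U^\mathbb N,\sigma)}\Big\{h_\mu(\sigma)+\Lambda^s(\mu)+\langle q,\textstyle\int_{\mathcal U^\mathbb N}\Phi\,\mathrm d\mu-\bm{\alpha}\rangle\Big\}, \]
where $\Lambda^s(\mu)=\lim_n\frac1n\int\log\varphi^s(A_{\xi|_n})\,\mathrm d\mu$. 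Since the $A_u$ are diagonal, $\Lambda^s(\mu)$ is the explicit piecewise-linear function equal to $-s\chi_1(\mu)$ on $[0,1]$ and to $-\chi_1(\mu)-(s-1)\chi_2(\mu)$ on $[1,2]$, so that $\Lambda^s(\mu)+h_\mu(\sigma)\ge 0$ holds exactly when $s\le\operatorname{dim}_{\mathrm L}(\mu)$, i.e.\ $\operatorname{dim}_{\mathrm L}(\mu)$ is precisely the zero of the decreasing map $s\mapsto\Lambda^s(\mu)+h_\mu(\sigma)$. I would then interchange $\inf_q$ and $\sup_\mu$ by Sion's minimax theorem, justified because the expression is affine and weak*-upper-semicontinuous in $\mu$ over the compact convex set $\mathcal M(\mathcal U^\mathbb N,\sigma)$ and affine in $q$; the inner infimum $\inf_q\langle q,\int\Phi\,\mathrm d\mu-\bm{\alpha}\rangle$ equals $0$ when $\int\Phi\,\mathrm d\mu=\bm{\alpha}$ and $-\infty$ otherwise. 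This reduces the pressure formula to $\sup\{\Lambda^s(\mu)+h_\mu(\sigma):\int\Phi\,\mathrm d\mu=\bm{\alpha}\}$, and taking the supremum over admissible $s$ turns it into the measure-theoretic formula. The interior hypothesis $\bm{\alpha}\in\mathring L_\Phi$ is used here to ensure the constraint set of measures is nonempty and the minimax exhibits no duality gap.

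For the lower bound $\operatorname{dim}_{\mathrm H}(\pi(E_\Phi(\bm{\alpha})))\ge\operatorname{dim}_{\mathrm L}(\mu)$ I would fix any ergodic $\mu$ with $\int\Phi\,\mathrm d\mu=\bm{\alpha}$. Birkhoff's ergodic theorem gives $\frac1n S_n\Phi(\xi)\to\bm{\alpha}$ for $\mu$-a.e.\ $\xi$, so $\mu(E_\Phi(\bm{\alpha}))=1$ and the pushforward $\pi_*\mu$ is carried by $\pi(E_\Phi(\bm{\alpha}))$. The crucial input is that for systems in $\mathcal D$ one has $\operatorname{dim}_{\mathrm H}(\pi_*\mu)=\operatorname{dim}_{\mathrm L}(\mu)$: this is exactly where the SOSC, the domination from $|b_u|>|c_u|$ (or $|b_u|<|c_u|$), and the ESC / algebraic-no-exact-overlap hypotheses enter, through the Ledrappier--Young exact-dimension formula for $\pi_*\mu$ combined with the theorem on self-similar measures applied to the projected measure $\pi_{1*}\mu$ (resp.\ $\pi_{2*}\mu$) under $\mathcal G_1$ (resp.\ $\mathcal G_2$). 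Consequently $\operatorname{dim}_{\mathrm H}(\pi(E_\Phi(\bm{\alpha})))\ge\operatorname{dim}_{\mathrm L}(\mu)$; taking the supremum over ergodic admissible $\mu$ and then passing to all invariant measures by an approximation argument that again exploits $\bm{\alpha}\in\mathring L_\Phi$ yields the lower bound.

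The upper bound is the most delicate step and will be carried out by a covering argument. Fix $s$ strictly above the critical value, so there is a $q$ with $P(\log\varphi^s+\langle q,\Phi-\bm{\alpha}\rangle)=-2\delta<0$. Covering $\pi(E_\Phi(\bm{\alpha}))$ by cylinder images is complicated by anisotropy: each $\pi([\bm u])$ sits in a long thin rectangle of side lengths comparable to $|b_{\bm u}|$ and $|c_{\bm u}|$, so I must pass to \emph{approximate squares}, grouping cylinders until the two side lengths reach a common scale, for which the natural $s$-dimensional cost of $\pi([\bm u])$ is precisely $\varphi^s(A_{\bm u})$. Restricting to the level set, every relevant word satisfies $S_n\langle q,\Phi-\bm{\alpha}\rangle=o(n)$, so $\exp(S_n\langle q,\Phi-\bm{\alpha}\rangle)$ is subexponential along the cover; combining this with the asymptotic $\sum_{\bm u\in\mathcal U^n}\varphi^s(A_{\bm u})\exp(S_n\langle q,\Phi-\bm{\alpha}\rangle)\asymp e^{-2\delta n}$ forces the $s$-dimensional Hausdorff premeasure of the approximate-square cover to vanish as $n\to\infty$. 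Hence $\mathcal H^s(\pi(E_\Phi(\bm{\alpha})))=0$ and $\operatorname{dim}_{\mathrm H}\le s$; letting $s$ decrease to the critical value closes the bound.

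I expect the main obstacle to be twofold. The first is making the approximate-square covering rigorous, namely controlling the bounded overlap multiplicity and the matching of horizontal and vertical scales, where the domination condition is essential to keep the geometry of $\pi([\bm u])$ genuinely comparable to a $|b_{\bm u}|\times|c_{\bm u}|$ rectangle rather than a sheared parallelogram. The second is handling the $\limsup/\liminf$ nature of $E_\Phi(\bm{\alpha})$: I would work on the $\epsilon$-approximate level sets $\{\xi:|\frac1n S_n\Phi(\xi)-\bm{\alpha}|<\epsilon\text{ for all large }n\}$, run the covering estimate with constants uniform in the cutoff, and let $\epsilon\to 0$. The deep identity $\operatorname{dim}_{\mathrm H}\pi_*\mu=\operatorname{dim}_{\mathrm L}\mu$ is the other indispensable ingredient for the lower bound, but I would invoke it from the existing self-affine literature rather than reprove it.
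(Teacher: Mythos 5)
Your overall architecture mirrors the paper's: an upper bound via a pressure/covering estimate, identification of the pressure formula with the measure formula via the subadditive variational principle, and a lower bound by pushing forward an invariant measure and showing its projection suffers no dimension drop. Your Sion-minimax derivation of the equality of the two suprema is a reasonable self-contained alternative to the paper's route (the paper gets one inequality from the variational principle and the other from \cite[Prop.~4.3]{BJKR21}), and your approximate-squares covering argument is essentially the content of \cite[Lemma 3.1 and Prop.~3.2]{BJKR21}, which the paper simply cites.

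There is, however, a genuine gap in your lower bound. You fix an \emph{arbitrary} ergodic $\mu$ with $\int\Phi\,\mathrm{d}\mu=\bm{\alpha}$ and assert that $\dim_{\mathrm{H}}(\pi_*\mu)=\operatorname{dim}_{\mathrm{L}}(\mu)$ follows from the self-affine literature. But the no-dimension-drop inputs you name --- Hochman's Theorem 1.1 (resp.\ Rapaport's theorem) applied to the projected IFS $\mathcal G_1$, combined with the B\'ar\'any--K\"aenm\"aki Ledrappier--Young formula --- are theorems about \emph{self-similar} and \emph{self-affine} measures, i.e.\ pushforwards of Bernoulli (or quasi-Bernoulli) measures. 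For a general ergodic $\mu$ the projection of $\pi_*\mu$ to the first coordinate is not a self-similar measure, so it does not satisfy the stationarity equation that Hochman's and Rapaport's results require, and the cited Ledrappier--Young formula does not apply either. The paper circumvents exactly this: it works only with fully supported $n$-step Bernoulli measures $\nu$ and their shift-averages $\tilde\nu$ (which are ergodic and quasi-Bernoulli, so that $P_x(\tilde\nu\circ\pi^{-1})$ is genuinely self-similar for $\mathcal G_1$), and it invokes \cite[Prop.~4.3]{BJKR21} to guarantee that such measures already realise the supremum $\sup\{s\ge 0:\inf_q P(\log\varphi^s+\langle q,\Phi-\bm{\alpha}\rangle)\ge 0\}$. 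Your closing ``approximation argument passing to all invariant measures'' points in the wrong direction: the needed reduction is not from ergodic to invariant measures but from general ergodic measures down to Bernoulli-type ones \emph{before} the exact-dimensionality machinery is applied. Without that reduction (or without importing a substantially stronger projection theorem valid for arbitrary ergodic measures together with a Ledrappier--Young formula beyond the quasi-Bernoulli class), your lower bound does not close.
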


\medskip
The second family of IFS sets that we consider in this article is motivated by a specific type of representations of real numbers called {\em Generalised L\"uroth Series (GLS) expansions} as described in \cite{BBDK94}. As the name suggests, GLS expansions are generalisations of L\"uroth expansions, which were introduced in 1883 by L\"uroth \cite{Lur83} and for $x \in [0,1]$ have the form
\[ x = \sum_{n \ge 1} \frac{a_n}{\prod_{i=1}^n a_i(a_i+1)}, \quad a_n \in \mathbb N, \, n \ge 1.\]
L\"uroth expansions can be obtained from the IFS $\{ l_k:[0,1]\to [0,1]\}_{k \in \mathbb N}$ where $l_k(x) = \frac{k+x}{k(k+1)}$, see e.g.~\cite{JV69}. If for $x \in [0,1]$ there is a sequence $(a_n)_{n \ge 1} \in \mathbb N^\mathbb N$ such that
\[ x = \lim_{n \to \infty} l_{a_1} \circ \cdots \circ l_{a_n}(0),\]
then $x$ has a L\"uroth expansion with digits given by $(a_n)_{n \ge 1} \in \mathbb N^\mathbb N$. While L\"uroth expansions take their digits from the infinite digit set $\mathbb N$ and all terms in the expansion are positive, a GLS number system can have either finite or infinite digit sets and the corresponding GLS expansions can have both positive and negative terms. Given a finite or countably infinite digit set $\mathcal I$, a partition $\{ [\ell_k,r_k] \}_{k\in \mathcal I}$ of $[0,1]$ into closed intervals and a vector $ (\varepsilon_k)_{k\in \mathcal I} \in \{0,1\}^{\# \mathcal I}$, one can consider the IFS
\begin{equation}\label{q:singlegls}
\{ g_k:[0,1]\to [0,1]\}_{k\in \mathcal I},
\end{equation}
where $g_k$ maps the interval $[0,1]$ affinely onto $[\ell_k,r_k]$ in an orientation-preserving manner if $\varepsilon_k=0$ and in an orientation-reversing manner if $\varepsilon_k=1$. In other words, if we write $K_k = (r_k-\ell_k)^{-1}$, then $g_k(x) = \ell_k + \frac{\varepsilon_k +  (-1)^{\varepsilon_k}x }{K_k}$. Since $g_k([0,1]) = [\ell_k,r_k]$ for each $k$, it follows that for each $x \in [0,1]$, there is a sequence $(a_n)_{n \ge 1}$ such that
\[ x = \lim_{n \to \infty} g_{a_1} \circ \cdots \circ g_{a_n}(0).\]
Thus, $x$ can be expressed as
\begin{equation}\label{q:genexp}
x = \sum_{n \ge 1} (-1)^{\sum_{i=1}^{n-1}\varepsilon_{a_i}} \frac{\ell_{a_n}K_{a_n}+\varepsilon_{a_n}}{\prod_{i=1}^n K_{a_i}},
\end{equation}
which is called a {\em GLS expansion} of $x$ with digit set $\mathcal I$. Here, we let $\sum_{i=1}^0 \varepsilon_{a_i}=0$ and $\prod_{i=1}^0K_{a_i}=1$. One recovers the L\"uroth expansions by taking $\mathcal I=\mathbb N$, $[\ell_k,r_k] = [\frac1{k+1}, \frac1k]$ and $\varepsilon_k=0$ for each $k \ge 1$ and one obtains integer base $N$-expansions by setting $\mathcal I = \{0,1, \ldots, N-1\}$ and taking $[\ell_k,r_k] = [\frac{k}{N}, \frac{k+1}{N}]$ and $\varepsilon_k=0$. The expansions from \eqref{q:genexp} can also be seen as signed versions of Cantor base expansions, as introduced by Cantor in \cite{Can69}. GLS expansions have been considered previously in \cite{Mun11,Arr15,KKSS15,JMS18,HK23,BK24} and recently also in relation to neural networks \cite{BKSN19,RHN23}. Level sets for L\"uroth expansions and more generally GLS expansions have been considered in particular with respect to digit frequencies, see \cite{BI09}. Such level sets are known as {\em Besicovitch-Eggleston sets} due to the results from \cite{Bes35} by Besicovitch and \cite{Egg49} by Eggleston on the Hausdorff dimension on digit frequency level sets for integer base expansions.

\medskip
In the above setting, for any given GLS number system, all but countably many numbers in $[0,1]$ have a unique GLS expansion in that system and the numbers that do not have a unique expansion have exactly two expansions. In this article, we consider IFSs that correspond to GLS number systems with redundancy, that is, in which all numbers have uncountably many different representations in the system. Number systems with redundancy have proven interesting in several settings, including signed binary expansions where they are used to find so-called minimal weight expansions, i.e.\! expansions that maximise the number of digits 0, see e.g.~\cite{MO90,KT93,DK20}, and in non-integer base expansions in relation to applications in analogue-to-digital converters and random number generation, see e.g.~\cite{DDGV06,JM16}. Number systems with redundancy have also been considered in \cite{KKV17,KMTV22,KM22b} for continued fraction expansions and L\"uroth expansions. To obtain a GLS number system with redundancy, we let $J \in \mathbb N_{\ge 2}$ and start with $J$ IFSs that correspond to $J$ different GLS number systems with finite digit sets. We combine these into one diagonally affine IFS on $\mathbb R^2$, which we call a GLS IFS, by using a positive probability vector $ (p_j)_{0 \le j < J}$, so $p_j >0$ for all $0 \le j < J$ and $\sum_{0 \le j< J} p_j=1$. This vector $(p_j)_{0 \le j < J}$ can be thought of as the probabilities with which the $j$-th GLS number system is chosen to generate the $n$-th digit in the expansions for any $n \ge 1$. Therefore, a GLS IFS is given by the following data:
\begin{itemize}
\item[1.] an integer $J \in \mathbb N_{\ge 2}$ and a positive probability vector $(p_j)_{0 \le j < J}$;
\item[2.] for each $0 \le j < J$, a number $B_j \in \mathbb N_{\ge 2}$, a partition $0=r_{(j,0)} < r_{(j,1)} < \cdots < r_{(j,B_j)}=1$ and a vector $(\varepsilon_{(j,k)})_{0 \le k < B_j} \in \{0,1\}^{B_j}$.\\
\end{itemize}
If we set $\mathcal E = \{ (j,k) \, : \, 0 \le j < J, \, 0 \le k < B_j\}$ and for each $(j,k) \in \mathcal E$ let $q_{(j,k)}=r_{(j,k+1)}-r_{(j,k)}$ and
\begin{equation}\label{q:Av}
A_{(j,k)} = \left[\begin{array}{cc}
\displaystyle (-1)^{\varepsilon_{(j,k)}}q_{(j,k)} & 0 \\
0 & p_j
\end{array}\right], \quad \bm{v}_{(j,k)} = \left[\begin{array}{c}
\displaystyle r_{(j,k)} + \varepsilon_{(j,k)}q_{(j,k)} \\
\sum_{i=0}^{j-1}p_i
\end{array} \right],
\end{equation}
then we call the IFS $\{ A_e + \bm{v}_e\}_{e \in \mathcal E}$ a {\em GLS IFS}. See Figure~\ref{f:2and3}(b) for an example.

\begin{center}
\begin{figure}[ht]
\subfigure[Example of an IFS satisfying (D')(a)]
{
\begin{tikzpicture}[scale=2]
\draw[white](-.5,.5)--(1.5,.5);
\draw[dotted](0,.6)--(1,.6)(2/3,0)--(2/3,1);
\filldraw[fill=green!40!blue!90!black, draw=black] (0,0) rectangle (1/2,.6);
\filldraw[fill=green!60!blue!90!black, draw=black] (2/3,.6) rectangle (1,1);
\draw[thick](0,0)--(1,0)--(1,1)--(0,1)--cycle;
\draw[thick](0,.6)node[left]{$p$}--(1/2,.6)(2/3,.6)--(1,.6);
\draw[thick](.5,0)node[below]{$\frac12$}--(1/2,.6)(2/3,.6)--(2/3,1);
\node[below] at (2/3,0){$\frac23$};
\end{tikzpicture}}
\hspace{1.5cm}
\subfigure[Example of a GLS IFS]
{\begin{tikzpicture}[scale=2]
\draw[white](-.5,.5)--(1.5,.5);
\filldraw[fill=green!40!blue!90!black, draw=black] (0,0) rectangle (1/2,.6);
\filldraw[fill=green!20!red!90!black, draw=black] (1/2,0) rectangle (1,.6);
\filldraw[fill=green!60!blue!90!black, draw=black] (2/3,.6) rectangle (1,1);
\filldraw[fill=blue!60!red!90!black, draw=black] (1/3,.6) rectangle (2/3,1);
\filldraw[fill=yellow!60!red!90!black, draw=black] (0,.6) rectangle (1/3,1);
\draw[thick](0,0)--(1,0)--(1,1)--(0,1)--cycle;
\draw[thick](0,.6)node[left]{$p$}--(1,.6);
\draw[thick](.5,0)node[below]{$\frac12$}--(1/2,.6)(1/3,.6)--(1/3,1)--(2/3,1);
\draw[dotted] (1/3,0)--(1/3,1)(2/3,0)--(2/3,1);
\node[below] at (1/3,0){$\frac13$};
\node[below] at (2/3,0){$\frac23$};
\end{tikzpicture}}
\caption{Two examples of IFSs. The coloured rectangles indicate the images of the unit square under the maps in the IFS.}
\label{f:2and3}
\end{figure}
\end{center}

We mention a few particular properties of GLS IFSs. Each GLS IFS satisfies the strong open set condition and has $\Lambda = [0,1]^2$. For the projection onto the first coordinate, we use $\mathcal G_1=\{ h_e:[0,1] \to [0,1]\}_{e \in \mathcal E}$, where for each $e \in \mathcal E$, we set
\[ h_e(x) = r_e + q_e(\varepsilon_e + (-1)^{\varepsilon_e} x).\]
Without additional assumptions, the GLS IFS need not fall into one of the categories (D)(a) or (D)(b). The projection on the second coordinate $\mathcal G_2 = \{ g_{2,(j,k)}(y)= p_j y + \sum_{i=0}^{j-1} p_i \}_{(j,k) \in \mathcal E}$ of a GLS IFS contains several duplicates of each map. Therefore, GLS IFSs do not fall into the class of diagonally affine IFSs that satisfy (D'), but by removing these duplicates they can potentially contain a subcollection of contractions that satisfies (D') as shown in Figure~\ref{f:2and3}.

\medskip
We can obtain number expansions from a GLS IFS in the following way. For each $x \in [0,1]$, there are sequences $(e_m)_{m \in \mathbb N} \in \mathcal E^\mathbb N$ such that $x$ can be written as
\begin{equation}\label{q:glsifs}
x = \lim_{m \to \infty} h_{e_1} \circ \cdots \circ h_{e_m} (0).
\end{equation}
If for each $m \in N$ we write
\begin{equation}\label{q:smKmtm} s_m = \varepsilon_{e_m}, \quad K_m = q_{e_m}^{-1}, \quad t_m = r_{e_m} + \varepsilon_{e_m}K_m^{-1},
\end{equation}
then it follows from \eqref{q:glsifs} that
\begin{equation}\label{q:fglse}
x = \sum_{m \ge 1} (-1)^{\sum_{i=1}^{m-1}s_i} \frac{t_m}{\prod_{i=1}^{m}K_i}
\end{equation}
and we can see the resemblance with L\"uroth expansions. It is shown in Proposition~\ref{p:uncountable} below that, under the additional assumption on the GLS IFS that $h_e \neq h_{e'}$ whenever $e \neq e'$, indeed all numbers $x \in [0,1]$ have uncountably many different representations of the form \eqref{q:glsifs}. We give several examples of GLS IFSs and the associated number expansions at the end of the article.

\medskip
For GLS IFSs, we consider the potential that captures digit frequencies. For $e \in \mathcal E$, let $[e] \subseteq \mathcal E^\mathbb N$ denote the cylinder set of those sequences that have $e \in \mathcal E$ as their first term and $\mathbbm 1_{[e]}: \mathcal E^\mathbb N \to \{0,1\}$ the indicator function on $[e]$. Define the continuous potential $\mathbbm 1: \mathcal E^\mathbb N \to \{0,1\}^{\#\mathcal E}$ by $\mathbbm 1(\omega) = (\mathbbm 1_{[e]}(\omega))_{e \in \mathcal E}$. For each $e \in \mathcal E$ and $\omega \in \mathcal E^\mathbb N$, write
\[ \tau_e (\omega) = \lim_{n \to \infty} \frac{\# \{1 \le i \le n \, : \, \omega_i = e\}}{n}\]
for the frequency of the digit $e$ in $\omega$. Consider a frequency vector $\bm{\alpha} = (\alpha_e)_{e \in\mathcal E} \in [0,1]^{\# \mathcal E}$, i.e.\! that satisfies $\sum_{e \in \mathcal E} \alpha_e =1$, and let $F(\bm{\alpha}) = \pi (E_{\mathbbm 1}(\bm{\alpha}))$. Then
\begin{equation}\label{q:Falpha}
F(\bm{\alpha}) = \{ (x,y) \in [0,1]^2 \, : \, \exists\, \omega\in\pi^{-1}\{(x,y)\}\, \textrm{ s.t. } \tau_e(\omega) = \alpha_e \, \text{ for all } \,  e \in \mathcal E \}
\end{equation}
is the {\em GLS digit frequency level set} or {\em Besicovitch-Eggleston set} for $\bm{\alpha}$. Results on $\dim_{\mathrm H} (F(\bm{\alpha}))$ have been obtained in \cite[Theorem 1]{Nie99} in the specific case of Bedford-McMullen carpets, that is with $p_j = \frac1{J}$ for each $0 \le j <J$ and $(-1)^{\varepsilon_e}=1$ and $q_e=\frac1N$ for some fixed $N \in \mathbb N_{\ge 2}$ and all $e \in \mathcal E$. This result was extended in \cite[Corollary 1]{Ree11} for Lalley-Gatzouras carpets, which are similar to our setting but have the additional requirements that $\varepsilon_{(j,k)} =0$ and $q_{(j,k)} \le p_j$ for all $(j,k) \in \mathcal E$. In the current setting, a lower bound for $\dim_{\mathrm H}(F(\bm{\alpha}))$ in terms of the Ledrappier-Young formula for the Bernoulli measure $\mu_{\bm{\alpha}}$ can be deduced from \cite[Theorem 2.3 and Corollary 2.8]{BARANY2017}  in case the two Lyapunov exponents of the Bernoulli measure of the system differ and the frequency vector $\bm{\alpha}$ is strictly positive. In case the two Lyapunov exponents  of the Bernoulli measure $\mu_{\bm{\alpha}}$ are equal, one can apply \cite{FengHu} to obtain a similar lower bound in terms of the Ledrappier-Young formula for $\mu_{\bm{\alpha}}$.

\medskip
Here, we will instead, for fixed $y \in [0,1]$, focus on the {\em fibre level sets} 
\begin{equation}\label{q:Fwalpha}
F_y({\bm{\alpha}}):= \left\{x\in[0,1]\, :\, \exists\,\omega\in\pi^{-1}\{(x,y)\}\, \textrm{ s.t. }\tau_e(\omega)=\alpha_e\,\, \text{ for all } e\in\mathcal E\right\}.
\end{equation}
We only consider frequency vectors $\bm{\alpha}$ with $\alpha_j:= \sum_{k=0}^{B_j-1} \alpha_{(j,k)} >0$ for all $0 \le j < J$ (otherwise we could just as well have considered a smaller GLS IFS).
Let
\begin{equation}\label{q:walphadef}
W(\bm{\alpha}) := \{ y \in [0,1] \, : \, F_y(\bm{\alpha}) \neq \emptyset \}.
\end{equation}
Let $\mu_{\bm{\alpha}}$ be the $\bm{\alpha}$-Bernoulli measure on $\mathcal E^\mathbb N$. For each $0 \le j < J$, let $f_j:[0,1]\to [0,1]$ be the map given by
\begin{equation}\label{q:fj}
f_j(y) = p_jy + \sum_{i=0}^{j-1}p_i
\end{equation}
and define the map
\begin{equation}\label{q:pi1} \pi_2: \mathcal E^\mathbb N \to [0,1]; \, (j_m,k_m)_{m \ge 1} \mapsto \lim_{m \to \infty} f_{j_1} \circ \cdots \circ f_{j_m}(0).
\end{equation}
Set $\nu_{\bm{\alpha}} = \mu_{\bm{\alpha}} \circ \pi_2^{-1}$. As we will see later, $\nu_{\bm{\alpha}}(W(\bm{\alpha}))=1$. 
We have the following results on the Hausdorff dimension of the fibre Besicovitch-Eggleston sets.

\begin{theorem}\label{t:main4}
    Let $\{ A_e + \bm{v}_e\}_{e \in \mathcal E}$ be a GLS IFS and ${\bm\alpha}=(\alpha_e)_{e\in\mathcal E} \in [0,1]^{\# \mathcal E}$ a frequency vector. Then 
    $$\dim_{\mathrm{H}}(F_y({\bm\alpha}))
        \leq\frac{\sum_{e \in \mathcal E} \alpha_e \log \alpha_e - \sum_{0 \le j < J} \alpha_j \log \alpha_j}{\sum_{e\in\mathcal E} \alpha_e\log q_e}$$
    for all $y\in W({\bm\alpha})$.
    Furthermore, if $\bm{\alpha}$ satisfies that for each $0 \le j < J$ there are $k, \ell \in B_j$ with $k \neq \ell$, $\alpha_{(j,k)}>0$ and $\alpha_{(j, \ell)}>0$, then
    $$\dim_{\mathrm{H}}(F_y({\bm\alpha}))
        \geq\frac{\sum_{e \in \mathcal E} \alpha_e \log \alpha_e - \sum_{0 \le j < J} \alpha_j \log \alpha_j}{\sum_{e\in\mathcal E} \alpha_e\log q_e}$$
    for $\nu_{\bm\alpha}$-a.e.~$y\in W(\bm\alpha)$.
\end{theorem}

Fibrewise results similar in spirit to Theorem~\ref{t:main4} were obtained in \cite{NT}, where the authors study real numbers with a semi-regular continued fraction expansion that satisfies a certain growth condition on its digits.

\medskip
The paper is outlined as follows. In Section~\ref{s:preliminaries}, we provide the necessary preliminaries. We prove Theorem~\ref{t:main1} in Section~\ref{s:ddaifs}. Section~\ref{s:fgls} is devoted to GLS IFSs. Here, we show that given a GLS IFS that has $h_e \neq h_{e'}$ whenever $e \neq e'$, all $x \in [0,1]$ have uncountably many expansions of the form \eqref{q:glsifs}. We then continue with some results on the spectrum of the Besicovitch-Eggleston sets $F(\bm{\alpha})$ and on the sets $W(\bm{\alpha})$, which will be used in the proof of Theorem~\ref{t:main4}. This section also contains the proof of Theorem~\ref{t:main4}. 
Finally, Section~\ref{s:examples} contains some examples.
 
\subsection{Acknowledgments.}
The first two authors acknowledge the hospitality of Uppsala University and the support of the Knut and Alice Wallenberg Foundation for their research visits. The third author acknowledges the hospitality of Leiden University and the support of the Knut and Alice Wallenberg Foundation and the Swedish Research Council under grant no.~2021-06594 while the author was in residence at Institut Mittag-Leffler in Djursholm, Sweden during the program ``Two Dimensional Maps". We also thank the anonymous referee for their valuable corrections
and suggestions.

\section{Preliminaries}\label{s:preliminaries}
In this section, we introduce notation and collect several bits of information that are used for the results in the later sections. 

\subsection{Strings and sequences}
Let $\mathcal U$ be a finite set of symbols and denote by $\mathcal U^\mathbb N$ the set of one-sided infinite sequences of elements in $\mathcal U$. For each $n \ge 0$, the set $\mathcal U^n$ is the set of {\em words} of length $n$, where we let $\mathcal U^0 = \{ \varnothing \}$ be the set containing only the empty word, which we denote by $\varnothing$. Let $\mathcal U^* = \bigcup_{n \ge 0}\mathcal U^n$ be the set of all words. For a word $\bm{u} \in \mathcal U$, we use the notation $|\bm{u}|$ for its length, so $|\bm{u}| = n$ if $\bm{u} \in \mathcal U^n$. If $\bm{u} = u_1 \cdots u_n \in \mathcal U^*$, then for each $k \le n$, we use the notation $\bm{u}|_k = u_1 \cdots u_k$. Similarly for a sequence $\xi \in \mathcal U^\mathbb N$ and any $n \ge 1$, we set $\xi|_n = \xi_1 \cdots \xi_n$. 
The {\em cylinder set} corresponding to a word $\bm{u} \in \mathcal U^n$, $n \ge 0$, is denoted by
\[ [\bm{u}] = \{ \xi \in \mathcal U^\mathbb N \, : \, \xi|_n = \bm{u} \}.\]
For any sequence $\xi \in \mathcal U^\mathbb N$, any symbol $u \in \mathcal U$ and any $n \in \mathbb N$, we use the notation
\[ \tau_u(\xi,n) = \# \{1 \le m \le n \, : \, \xi_m = u\} \]
for the number of times the symbol $u$ occurs in the first $n$ elements of $\xi$ and
\[ \tau_u(\xi) = \lim_{n \to \infty} \frac{\tau_u(\xi,n)}{n}\]
for the frequency of the digit $u$ in $\xi$ if it exists. We use this notation in Section~\ref{s:fgls}.

\medskip
We can equip $\mathcal U^\mathbb N$ with a metric $\eta$ to obtain a compact metric space by setting 
\[ \eta : \mathcal U^\mathbb N \to [0,1]; \,  (\xi,\upsilon) \mapsto \begin{cases}
2^{-\min \{ n \ge 1\, : \, \xi_n \neq \upsilon_n \}}, & \text{if } \xi \neq \upsilon,\\
0, & \text{if } \xi=\upsilon.
\end{cases}\]
The {\em left shift} is denoted by $\sigma: \mathcal U^\mathbb N \to \mathcal U^\mathbb N$, i.e, $\sigma(\xi)_n = \xi_{n+1}$ for each $n \ge 1$. With a slight abuse of notation, we will use $\sigma$ to denote the left shift on any sequence space without specifying the alphabet as a subscript whenever no confusion can arise. Cylinder sets are both open and closed and generate the Borel $\sigma$-algebra on $\mathcal U^\mathbb N$. Let $\mathcal M (\mathcal U^\mathbb N, \sigma)$ denote the set of all shift-invariant Borel probability measures on $\mathcal U^\mathbb N$.
For $\mu \in \mathcal M (\mathcal U^\mathbb N, \sigma)$, we use $h_{\mu}(\sigma)$ to denote the measure-theoretic entropy of $\mu$ with respect to $\sigma$, which is defined by
\[
h_{\mu}(\sigma):=-\lim_{n \to \infty}\frac{1}{n}\sum_{\bm{u} \in \mathcal{U}_n} \mu([\bm{u}])\log \mu([\bm{u}]),
\]
where $0\log 0=0.$
Given a probability vector $\bm{p} = (p_u)_{u \in \mathcal U}$, the $\bm{p}$-{\em Bernoulli measure} $\mu_{\bm{p}}$ is the probability measure on $(\mathcal U^\mathbb N, \sigma)$ that is defined on the cylinder $[\bm{u}]=[u_1 \cdots u_n]$ by
\[ \mu_{\bm{p}}([\bm{u}])=p_{u_1} \cdots p_{u_n}.\]
Moreover, the measure-theoretic entropy of $\mu_{\bm{p}}$ with respect to $\sigma$ is given by
\begin{equation}\label{q:entropy}
h_{\mu_{\bm{p}}}(\sigma) = - \sum_{u \in \mathcal U} \mu_{\bm{p}}([u]) \log (\mu_{\bm{p}}([u])).
\end{equation}

\subsection{Matrix products}\label{sec:MatrixProd}
Let $(A_u)_{u \in \mathcal U} \in GL_2(\mathbb R)^{\# \mathcal U}$ be a collection of matrices as in \eqref{diag-matrix}. Recall that for a sequence $\xi=(\xi_n)_{n \ge 1} \in \mathcal U^\mathbb N$ and $n \in \mathbb N$, we set $A_{\xi|_n} = A_{\xi_1} \cdots A_{\xi_n}$.  For the entries on the diagonal of $A_{\xi|_n}$, write $b_{\xi|_n} = b_{\xi_1} \cdots  b_{\xi_n}$ and $c_{\xi|_n}=c_{\xi_1} \cdots  c_{\xi_n}$.  For $\bm{u} = u_1 \cdots u_n \in \mathcal U^n$, we similarly write $A_{\bm{u}} = A_{u_1} \cdots A_{u_n}$ with $b_{\bm{u}} = b_{u_1} \cdots  b_{u_n}$ and $c_{\bm{u}}=c_{u_1} \cdots  c_{u_n}$ for the diagonal entries.

\medskip
Let $\mathbb{P}^1$ be the real projective line, which is the set of all lines through the origin in $\mathbb R^2$. We say that a proper subset $\mathcal{C} \subset \mathbb{P}^1$ is a \textit{cone} if it is a closed projective interval and a \textit{multicone} if it is a finite union of cones. The collection  $ (A_u)_{u \in \mathcal U}$ of diagonal matrices as in \eqref{diag-matrix} is called {\em dominated} if there exists a multicone $\mathcal{C} \subset \mathbb{P}^1$ such that $\bigcup_{u \in \mathcal U} A_u \mathcal{C} \subset \mathring{\mathcal{C}}$. It was shown in \cite[Theorem B]{BG} that $ (A_u)_{u \in \mathcal U}$ is dominated if and only if there exist constants $C>0$ and $0<\tau<1$ such that
\begin{equation}\label{q:dom}
    \frac{b_{\bm{u}}\cdot c_{\bm{u}}}{\max\{b_{\bm{u}},c_{\bm{u}}\}^2} \leq C \tau^n
\end{equation}
for all $n \in \mathbb N$ and $\bm{u} \in \mathcal U^n$.

\medskip
For each diagonal matrix $A_u$ as in \eqref{diag-matrix}, the singular value function is given by
\[ \varphi^s (A_u) := \begin{cases}
\max \{b_u, c_u \}^s , & \text{if } 0 \le s < 1,\\
\max \{b_u, c_u\} \min \{b_u, c_u \}^{s-1}, & \text{if } 1 \le s < 2.
\end{cases}\]
The {\em Lyapunov exponents} of the collection $(A_u)_{u \in \mathcal U}$ with respect to a measure $\mu \in \mathcal{M}(\mathcal U^\mathbb N, \sigma)$ are defined as 
\[
\begin{aligned}
&\chi_1(\mu):= -\lim _{n\to\infty}\frac1n\int_{\mathcal U^\mathbb N}\log\max\left\{b_{\xi|_n},c_{\xi|_n}\right\}\,\mathrm{d}\mu(\xi),\\
&\chi_2(\mu):= -\lim _{n\to\infty}\frac1n\int_{\mathcal U^\mathbb N}\log\min\left\{b_{\xi|_n},c_{\xi|_n}\right\}\,\mathrm{d}\mu(\xi).
\end{aligned}\]
The Lyapunov dimension of $\mu \in \mathcal{M}(\mathcal U^\mathbb N, \sigma)$ is defined to be
\[ \operatorname{dim}_{\mathrm{L}}(\mu):= \min\left\{\frac{h_{\mu}(\sigma)}{\chi_1(\mu)}, 1+\frac{h_{\mu}(\sigma)-\chi_1(\mu)}{\chi_2(\mu)}\right\}.\]
For a continuous potential $\Phi: \mathcal U^\mathbb N \rightarrow \mathbb R^d$, $d \ge 1$, write $S_n \Phi=\sum_{k=0}^{n-1} \Phi \circ \sigma^k$ for its {\em Birkhoff sum}.  The {\em topological pressure} of $\Phi$ and $ (A_u)_{u \in \mathcal U}$ is given by
\[ P\left(\log \varphi^s+\Phi\right)=\lim _{n \rightarrow \infty} \frac{1}{n} \log \sum_{\bm{u} \in \mathcal U^n} \varphi^s(A_{\bm{u}}) \sup _{\xi \in[\bm{u}]} \exp \left(S_n \Phi(\xi)\right),\]
 where the existence of the limit is guaranteed by the sub-additivity of the potential.

\subsection{Hausdorff dimension}
For a subset $F \subseteq \mathbb R^n$, $n \ge 1$, and $\delta >0$, a $\delta$-cover of $F$ is a collection $\{U_i\}$ of subsets of $\mathbb R^n$ that each have diameter $\textrm{diam}(U_i)$ at most $\delta$ and satisfy $F \subseteq \bigcup_{i} U_i$. For $s >0$, the {\em $s$-dimensional Hausdorff outer measure} is defined as
\[ \mathcal H^s(F) = \lim_{d \downarrow 0} \inf \left\{ \sum_{i} \textrm{diam}(U_i)^s \, : \, \{ U_i\} \text{ is a $\delta$-cover of $F$} \right\}.\]
The {\em Hausdorff dimension} of the set $F$ is
\[ \operatorname{dim}_{\mathrm{H}}(F) = \inf\{s >0 \, : \, \mathcal H^s(F) =0\}.\]
Let $\mu$ be a finite Borel measure on $F$. The {\em Hausdorff dimension} of $\mu$ is
\[ \operatorname{dim}_{\mathrm{H}}(\mu) = \inf\{\operatorname{dim}_{\mathrm{H}}(Z)  \, : \, \mu(Z) =1\}.\]
The {\em lower pointwise dimension} of $\mu$ at a point $x \in F$ is defined by
\[ \underline{d}_{\mu}(x) = \liminf_{r \to 0} \frac{\log \mu(B(x,r))}{\log r},\]
where $B(x,r)$ denotes the open ball in $\mathbb R^n$ with radius $r$ centred at $x$. The following result can be found in e.g.~\cite[Theorem 7.1 and Theorem 7.2]{Pes97}.
\begin{lemma}\label{l:pwdim}
Let $F \subseteq \mathbb R^n$ be a Borel set and $\mu$ a finite Borel measure on $\mathbb R^n$. The following statements hold.
\begin{itemize}
\item[(i)] If $\underline{d}_{\mu}(x) \le c$ for some $c> 0$ and every $x \in F$, then $\operatorname{dim}_{\mathrm{H}}(F) \le c$.
\item[(ii)] If $\underline{d}_{\mu}(x) \ge c$ for some $c > 0$ and $\mu$-a.e.~$x \in F$, then $\operatorname{dim}_{\mathrm{H}}(\mu) \ge c$.
\end{itemize}
\end{lemma}

\section{Dominated diagonally affine IFSs}\label{s:ddaifs}
In this section, we prove Theorem~\ref{t:main1}. Recall the definition of the natural projection $\pi: \mathcal U^{\mathbb N} \to \Lambda $ from \eqref{q:pi}. Also recall the definitions of the sets $E_{\Phi}(\bm{\alpha})$ and $L_\Phi$ from \eqref{q:symblevel} and \eqref{q:Lphi}, respectively. We have the following upper bound for the Hausdorff dimension of $\pi(E_\Phi(\bm{\alpha}))$. In the proof, we make use of \cite[Proposition 3.2]{BJKR21}, which holds for general affine IFSs (including the diagonally affine case) on $\mathbb R^2$.

\begin{lemma}\label{l:u_upper}
Let $\left\{A_u+ \bm{v}_u\right\}_{u \in \mathcal U}$ be a diagonally IFS on $\mathbb R^2$ such that the collection $ (A_u)_{u \in \mathcal U}$  is dominated. Let $\Phi: \mathcal U^\mathbb N \rightarrow \mathbb{R}^d$, $d \ge 1$, be a continuous potential. Then for each $\bm{\alpha} \in \mathring{L}_{\Phi}$,
\[ \begin{aligned}
\operatorname{dim}_{\mathrm{H}}\left(\pi(E_{\Phi}(\bm{\alpha}))\right) & \le \sup \bigg\{\operatorname{dim}_{\mathrm{L}}(\mu)\, :\,  \mu \in \mathcal{M}(\mathcal U^\mathbb N, \sigma) \text { and } \int_{\mathcal U^\mathbb N} \Phi\, \mathrm{d} \mu=\bm{\alpha} \bigg\} \\
& =\sup \bigg\{s \geq 0\, :\,  \inf _{q \in \mathbb{R}^d} P\left(\log \varphi^s+\langle q, \Phi-\bm{\alpha}\rangle\right) \geq 0\bigg\}.
\end{aligned}\]
\end{lemma}

\begin{proof}
It follows directly from \cite[Lemma 3.1 and Proposition 3.2]{BJKR21} that for any diagonally affine IFS $\{A_u+ \bm{v}_u\}_{u\in \mathcal U}$ on $\mathbb R^2$ and continuous potential $\Phi: \mathcal U^\mathbb N \to \mathbb R^d$, $d\ge 1$, and any $\bm{\alpha} \in \mathring{L}_{\Phi}$,
\begin{equation}\label{q:upperbound}
\dim_{\mathrm{H}} (\pi(E_{\Phi}(\bm{\alpha}))) \leq  \sup\left\{ s \ge 0 \, : \, \inf_{q \in \mathbb R^{d}} P (\log \varphi^s + \langle q,\Phi-\bm{\alpha} \rangle) \ge 0 \right\}.
\end{equation}
A measure $\nu \in \mathcal{M}(\mathcal U^\mathbb N, \sigma^{n})$ is called an {\em $n$-step Bernoulli measure} if it is a Bernoulli measure on $\left(\mathcal U^\mathbb N, \sigma^{n} \right)$. For  $n$-step Bernoulli measures $\nu \in \mathcal M (\mathcal U^\mathbb N, \sigma^{n})$, write
\begin{equation}\label{q:tildemu}
\tilde \nu = \frac1n \sum_{k=0}^{n-1} \nu \circ \sigma^{-k}.
\end{equation}
Then $\tilde \nu \in \mathcal M (\mathcal U^\mathbb N, \sigma)$ and $\tilde \nu$ is ergodic. Since $( A_u)_{u \in \mathcal U}$ is dominated, it follows by \cite[Proposition 4.3]{BJKR21} and \eqref{q:upperbound} that for any $\bm{\alpha} \in \mathring{L}_{\Phi}$,
\[
\begin{split}
 \sup & \left\{s \geq 0: \inf _{q \in \mathbb{R}^d} P\left(\log \varphi^s+\langle q, \Phi-\bm{\alpha}\rangle\right) \geq 0\right\} \\
\le \ &  \sup \bigg\{\dim_{\mathrm{L}}(\tilde{\nu}): \nu  \text { fully supported } n\text{-step  Bernoulli and } \int_{\mathcal U^\mathbb N} \Phi \, \mathrm{d} \tilde{\nu}=\bm{\alpha}  \bigg\} \\
\le \ &  \sup \left\{\dim_{\mathrm{L}}(\mu): \mu \in \mathcal{M}(\mathcal U^\mathbb N, \sigma)\text { and } \int_{\mathcal U^\mathbb N} \Phi \, \mathrm{d} \mu=\bm{\alpha}\right\}.
\end{split}
\]
On the other hand, if we let $\mu \in \mathcal{M}(\mathcal U^\mathbb N, \sigma)$ be such that $\int_{\mathcal U^\mathbb N} \Phi\,  \mathrm{d} \mu=\bm{\alpha}$, then for any $0 \le t<\operatorname{dim}_{\mathrm{L}}(\mu)$, it holds by the sub-additive variational principle (see \cite{CFH08}) that for all $q \in \mathbb{R}^d$,
\[ P\left(\log \varphi^t+\langle q, \Phi-\bm{\alpha}\rangle\right) \geq h_{\mu}(\sigma)+\lim _{n \rightarrow \infty} \frac{1}{n} \int_{\mathcal U^\mathbb N} \log \varphi^t\left(A_{\left.\xi\right|_n}\right)\,  \mathrm{d} \mu(\xi)>0.\]
Hence, $t \leq \sup \left\{s \geq 0: \inf _{q \in \mathbb{R}^d} P\left(\log \varphi^s+\langle q, \Phi-\bm{\alpha}\rangle\right) \geq\right.$ $0\}$ and thus,
\[ \begin{split}
\sup \bigg\{\dim_{\mathrm{L}}(\mu) \,:\,   \mu \in \mathcal{M}(\mathcal U^\mathbb N, \sigma) & \text{ and } \int_{\mathcal U^\mathbb N} \Phi \, \mathrm{d} \mu=\bm{\alpha}\bigg\} \\
 \le \ & \sup \left\{s \geq 0: \inf _{q \in \mathbb{R}^d} P\left(\log \varphi^s+\langle q, \Phi-\bm{\alpha}\rangle\right) \geq 0\right\} .
\end{split}\]
This gives the result.
\end{proof}

\begin{remark}\label{eq: Lemma}
{\rm Note that the proof of Lemma~\ref{l:u_upper} shows that in fact
\[ \begin{split}
 \sup & \left\{s \geq 0: \inf _{q \in \mathbb{R}^d} P\left(\log \varphi^s+\langle q, \Phi-\bm{\alpha}\rangle\right) \geq 0\right\} \\
= \ &  \sup \bigg\{\dim_{\mathrm{L}}(\tilde{\nu}): \nu  \text { fully supported } n\text{-step  Bernoulli and } \int_{\mathcal U^\mathbb N} \Phi \, \mathrm{d} \tilde{\nu}=\bm{\alpha}  \bigg\} \\
= \ &  \sup \left\{\dim_{\mathrm{L}}(\mu): \mu \in \mathcal{M}(\mathcal U^\mathbb N, \sigma)\text { and } \int_{\mathcal U^\mathbb N} \Phi \, \mathrm{d} \mu=\bm{\alpha}\right\}.
\end{split}\]
}\end{remark}

Under the additional conditions mentioned in the statement of Theorem~\ref{t:main1}, we can prove that this upper bound in fact equals the Hausdorff dimension of the level set. Note that it would be possible to combine Hochman \cite{Hochman14} and Jordan and Simon \cite{JS} to obtain a similar result for almost all vectors $\bm{v}_u$ but our Theorem \ref{t:main1} is proved for all vectors $\bm{v}_u$. \cite{BJKR21} proved a similar result for affine IFSs satisfying the SOSC under the assumption that the set of matrices $ (A_u)_{u \in \mathcal U}$ is strongly irreducible such that the generated subgroup of the normalised matrices is not relatively compact. Theorem \ref{t:main1} is inspired by their result.


\begin{proof}[Proof of Theorem~\ref{t:main1}]
Let $\{ A_u + \bm{v}_u\}_{u \in \mathcal U} \in \mathcal D$. For each $u \in \mathcal U$, it holds that
\[ \frac{b_u \cdot c_u}{ \max\{ b_u, c_u\}^2} = \frac{\min \{ b_u, c_u\} }{ \max\{ b_u, c_u\}} <1,\]
since either $|b_u| > |c_u|$ for all $u \in \mathcal U$ or $|b_u| < |c_u|$ for all $u \in \mathcal U$. Take $\tau = \max_{u \in \mathcal U} \left\{ \frac{\min \{ b_u, c_u\} }{ \max\{ b_u, c_u \}} \right\}$. Then $\tau < 1$ and, since each $A_u$ is a diagonal matrix, we get \eqref{q:dom} with $C=1$. Hence, $\{A_u\}_{u \in \mathcal U}$ is dominated and therefore the desired upper bound for the Hausdorff dimension of $\pi(E_{\Phi}(\bm{\alpha}))$ is given by Lemma~\ref{l:u_upper}. 

\medskip
For the lower bound, suppose that $\nu \in \mathcal M(\mathcal U^\mathbb N, \sigma^{n})$ is a fully supported $n$-step Bernoulli measure with $\int_{\mathcal U^\mathbb N} \Phi\, \mathrm{d} \tilde{\nu}=\bm{\alpha}$ with $\tilde \nu$ as defined in \eqref{q:tildemu}. The existence of the measure $\nu$ is guaranteed by \cite[Proposition 4.3]{BJKR21}. Then $\tilde \nu \in \mathcal M (\mathcal U^\mathbb N, \sigma)$ and $\tilde \nu$ is ergodic and therefore from $\int_{\mathcal U^\mathbb N} \Phi\, \mathrm{d} \tilde{\nu}=\bm{\alpha}$, we get that
\begin{equation}\label{full-mea}
\tilde{\nu}\left(\left\{\xi \in \mathcal U^\mathbb N: \lim _{n \rightarrow \infty} \frac{1}{n} S_n \Phi (\xi)=\bm{\alpha}\right\} \right)=1. 
\end{equation}

\medskip
Let $\hat{\nu}=\tilde{\nu} \circ \pi^{-1} $. Assume that $|b_u|> |c_u|$ for all $u \in \mathcal U$ so that we are in the situation of condition (D) (the proof for the case (D') goes similarly). Then the strong stable direction of the collection $(A_u)_{u \in \mathcal U}$ is equal to the subspace parallel to the $y$-axis (see \cite{BG}). Let $P_x \hat{\nu}$ be the measure on $[0,1]$ given by the canonical projection onto the $x$-coordinate of $\hat \nu$. Since the matrices $A_u$ are diagonal, $P_x \hat{\nu}$ is a self-similar measure for the IFS $\mathcal G_1$, i.e.\! there is a probability vector $\hat{\bm{p}} = (\hat p_u)_{u \in \mathcal U}$ such that
\[P_x \hat{\nu} (B)=\sum_{u \in \mathcal U} \hat p_u P_x \hat{\nu} (g_{1,u}^{-1}(B))\] 
for each Borel set $B \subseteq [0,1]$. Then condition (D)(a) together with \cite[Theorem 1.1]{Hochman14} or condition (D)(b) together with \cite[Theorem 1.2]{rapaport} yields
\begin{equation}\label{q:nodrop}
\dim_{\mathrm{H}} (P_x \hat{\nu})=\min\bigg\{1, \frac{h_{\tilde{\nu}}(\sigma)}{\chi_1(\tilde{\nu})}\bigg\}.
\end{equation}
 It then follows from \cite[Corollaries 2.7 and 2.8]{BARANY2017} and \eqref{q:nodrop} that
\[ \dim_{\mathrm{H}} (\hat{\nu})=\operatorname{dim}_{\mathrm{L}}(\tilde{\nu}).\]
This and \eqref{full-mea} yield $\dim_{\mathrm{H}}\left(\pi(E_{\Phi}(\bm{\alpha}))\right) \geq \operatorname{dim}_{\mathrm{L}}(\tilde{\nu})$. Since this holds for arbitrary fully supported $n$-step Bernoulli measures $\nu$ with $\int_{\mathcal U^\mathbb N} \Phi\, \mathrm{d} \tilde{\nu}=\bm{\alpha}$, the result follows from Remark \ref{eq: Lemma}. 
\end{proof}

\begin{remark}{\rm We make a small remark on the conditions (D) and (D'). It was shown by Hochman in \cite[proof of Theorem 1.5]{Hochman14} that an IFS satisfies the ESC if it does not have exact overlaps and all parameters $b_u, c_u,\beta_u, \gamma_u$ are algebraic numbers over $\mathbb Q$. In \cite{Chen_21, Baker-ESC, BK-ESC} it was shown that there exist iterated function systems that do not contain exact overlaps while there are cylinders which are super-exponentially close at all small scales, i.e.\! the ESC does not hold. What is needed in the proof of Theorem~\ref{t:main1} is \eqref{q:nodrop}, which is also guaranteed by \cite{rapaport} under the assumption of having algebraic $b_u,c_u$ and no exact overlaps. 
}\end{remark}

\section{Digit frequencies for finite GLS expansions}\label{s:fgls}
We now move to the second type of IFS we consider. Fix a GLS IFS $\{A_e +\bm{v}_e\}_{e \in \mathcal E}$. We start by proving some properties of the expansions from \eqref{q:fglse}.

\subsection{Multiple representations}
First, consider the representations of the points $y \in [0,1]$. Recall the definition of the maps $f_j$ from \eqref{q:fj}. The IFS $\{f_j \}_{0 \le j < J}$ satisfies the SOSC and has the interval $[0,1]$ as its attractor. Let $\pi_J: \{0,1, \ldots, J-1\}^\mathbb N \to [0,1]$ be the map given by
\[ \pi_J((j_m)_{m \ge 1}) = \lim_{m \to \infty}f_{j_1} \circ f_{j_2} \circ \cdots \circ f_{j_m}(0).\]
One easily sees that to all but countably many $y \in [0,1]$, there corresponds a unique sequence $\zeta \in \{0,1, \ldots, J-1\}^\mathbb N$ such that $y = \pi_J(\zeta)$ and otherwise $\#\pi^{-1}_J\{y\} =2$ and there is one sequence ending in an infinite string of 0's and one ending in an infinite string of $(J-1)$'s. We make the following observation, which we will use later on. Recall the definition of the set $W(\bm{\alpha})$ from \eqref{q:walphadef}.

\begin{lemma}\label{l:uniquew}
    Let $\bm{\alpha} = (\alpha_e)_{e \in \mathcal E} \in [0,1]^{\# \mathcal E}$ be a frequency vector with $\alpha_j>0$ for each $0 \le j < J$.
    Then $\#\pi_J^{-1}\{y\}=1$ for any $y \in W(\bm{\alpha})$.
\end{lemma}

\begin{proof}
    Let $y \in [0,1]$ be such that $\#\pi_J^{-1}\{y\}=2$ and let $x \in [0,1]$. Then any $\omega = (j_m,k_m)_{m \ge 1} \in \pi^{-1}\{(x,y)\}$ either has $j_m=0$ for all $m$ large enough or $j_m = J-1$ for all $m$ large enough. In the first case, $\sum_{k=0}^{B_j-1}\tau_{(j,k)}(\omega) =0 \neq \alpha_j$ for all $j \neq 0$ and in the second case, $\sum_{k=0}^{B_j-1} \tau_{(j,k)}(\omega) =0 \neq \alpha_j$ for all $j \neq J-1$. Hence, $(x,y) \not \in F(\bm{\alpha})$ and thus $W(\bm{\alpha}) = \emptyset$.
\end{proof}

For a fixed $y \in [0,1]$, we can consider the expansions one obtains from the GLS IFS for $x \in [0,1]$. We define the {\em fibre fundamental intervals} corresponding to $y$ by setting for each $m \ge 1$ and $k_1, \ldots, k_m$ satisfying $0 \le k_i < B_{j_i}$ for all $1 \le i \le m$,
\begin{equation}\label{q:fibcyls}
\Delta_y(k_1, \ldots, k_m) := h_{(j_1,k_1)} \circ \cdots \circ h_{(j_m,k_m)} ([0,1]),
\end{equation}
where we let $(j_m)_{m \ge 1}$ be the lexicographically smallest sequence in $ \pi_J^{-1}\{y\}$. For $y \in W(\bm{\alpha})$ this means that $(j_m)_{m \ge 1} \in \pi_J^{-1}\{y\}$ is the unique sequence that satisfies $\tau_j((j_m)_{m \ge 1})=\alpha_j$ for each $0 \le j < J$ and for $y \in [0,1]\setminus W(\bm{\alpha})$ the sequence $(j_m)_{m \ge 1}$ is the one ending in an infinite string of $(J-1)$'s.
\medskip

If we fix $y\in W({\bm \alpha})$ and take $x\in[0,1]$ such that $\#\pi^{-1}\{(x,y)\}>1$, then we know by Lemma~\ref{l:uniquew} that $\#\pi_J^{-1}\{y\}=1$, say $y=\pi_J((j_m)_{m\geq1})$.
Consequently, $x$ must have multiple expansions along the fibre $y$ and so must lie on the boundary of a fibre fundamental interval $\Delta_y(k_1,\dotsc,k_m)$ for some $0\le k_i<B_{j_i}$ ($1\le i\le m$) and some $m\in\mathbb{N}$.
Since each fibre fundamental interval has two boundary points and there are only countably many fibre fundamental intervals, the set of such points $x$ must be countable.
\medskip

Now, fix an $x \in [0,1]$. Since the GLS IFS $\{A_e + \bm{v}_e\}_{e \in \mathcal E}$ has $[0,1]^2$ as its attractor, to any $y \in [0,1]$, there corresponds a sequence $\omega \in \mathcal E^\mathbb N$ such that $\pi(\omega)=(x,y)$. Therefore, to any sequence $(j_m)_{m \ge 1} \in \{0,1, \ldots, J-1\}^\mathbb N$, there corresponds a sequence $(k_m)_{m \ge 1}$ with $0 \le k_m < B_{j_m}-1$ for each $m \in \mathbb N$ such that
\[ x= \lim_{m \to \infty} h_{(j_1,k_1)} \circ \cdots \circ h_{(j_m,k_m)}(0).\]
We show that if $h_e \neq h_e'$ whenever $e \neq e'$, then each of the sequences $(j_m)_{m \ge 1} \in \{0,1, \ldots, J-1\}^\mathbb N$ yields a different GLS expansion for $x$ as in \eqref{q:fglse}.

\medskip
The GLS expansions from \eqref{q:fglse} are given by the triples of digits $(s_m,K_m,t_m)$, $m \in \mathbb N$, from \eqref{q:smKmtm}. Therefore, if we set
\[ \mathcal A =  \left\{ \left(\varepsilon_e, q_e^{-1}, r_e+ \varepsilon_e q_e \right) \, : \, e \in \mathcal E \right\},\]
then we can think of $\mathcal A$ as the {\em GLS digit set} corresponding to $\{ A_e + \bm{v}_e\}_{e \in \mathcal E}$ and we can map sequences $ ((j_m,k_m))_{m \ge 1} \in \mathcal E^\mathbb N$ to sequences $(s_m,K_m,t_m)_{m \ge 1} \in \mathcal A^\mathbb N$ through the identification given in \eqref{q:smKmtm}. Let $(j_m)_{m \ge 1}, (j_m')_{m \ge 1} \in \{0,1, \ldots, J-1\}^\mathbb N$ be two different sequences, so there is an $m \in \mathbb N$ such that $j_m \neq j_m'$. Let $(j_m,k_m)_{m \ge 1}, (j_m',k_m')_{m \ge 1} \in \mathcal E^\mathbb N$ be two sequences that both project to $x$ in the second coordinate under $\pi$. Since $j_m \neq j_m'$, it holds that $(j_m,k_m) \neq (j_m',k_m')$. If we assume that $h_e \neq h_e'$ whenever $e \neq e'$, then it would follow that $\varepsilon_{(j_m,k_m)} \neq \varepsilon_{(j_m',k_m')}$ or $q_{(j_m,k_m)} \neq q_{(j_m',k_m')}$ and thus that the digits from $\mathcal A$ corresponding to $(j_m,k_m)$ and $(j_m',k_m')$ differ. Therefore, we immediately find the following result.

\begin{prop}\label{p:uncountable}
Let $\{A_e +\bm{v}_e\}_{e \in \mathcal E}$ be a GLS IFS with the additional assumption that $h_e \neq h_{e'}$ whenever $e \neq e'$.
Then for each $x \in [0,1]$, there are uncountably many different digit sequences $(s_m,K_m,t_m)_{m \ge 1} \in \mathcal A^\mathbb N$ with 
\[ x =\sum_{m\ge 1} (-1)^{\sum_{i=1}^{m-1}s_i} \, \frac{t_m}{\prod_{i=1}^m{K_i}}.\]
\end{prop}

The above also shows that there is a one-to-one correspondence between the sequences in $\mathcal E^\mathbb N$ and in $ \mathcal A^\mathbb N$, which justifies considering the elements of $\mathcal E$ as digits in the GLS expansions.

\subsection{Non-empty level sets} 
In this section, we determine for which frequency vectors $\bm{\alpha}$ the level set $F(\bm{\alpha})$ from \eqref{q:Falpha} and the set $W(\bm{\alpha})$ from \eqref{q:walphadef} are non-empty. We first consider the level sets $F(\bm{\alpha})$. 

\begin{prop}\label{SpectrumFull2D}
The set $F(\bm{\alpha})$ is non-empty for any frequency vector $\bm{\alpha} = (\alpha_e)_{e \in \mathcal E} \in [0,1]^{\# \mathcal E}$. 
\end{prop}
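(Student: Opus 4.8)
The plan is to exhibit a single sequence $\omega \in \mathcal E^\mathbb N$ whose digit frequencies match the prescribed $\bm{\alpha}$ and then push it forward by $\pi$. The natural candidate comes from the $\bm{\alpha}$-Bernoulli measure $\mu_{\bm{\alpha}}$ on $\mathcal E^\mathbb N$, under which the coordinates $\omega_1, \omega_2, \dots$ are independent and identically distributed with $\mu_{\bm{\alpha}}([e]) = \alpha_e$ for each $e \in \mathcal E$. This measure is well defined precisely because a frequency vector satisfies $\alpha_e \in (0,1)$ and $\sum_{e \in \mathcal E} \alpha_e = 1$.

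First I would fix $e \in \mathcal E$ and note that $\tfrac{1}{n} S_n \1_{[e]}(\omega) = \tfrac{1}{n}\#\{1 \le i \le n : \omega_i = e\}$, so that $\tau_e(\omega) = \lim_{n \to \infty} \tfrac1n S_n \1_{[e]}(\omega)$. Since the random variables $\1_{[e]}(\sigma^{k}\omega)$ are i.i.d.\ $\mathrm{Bernoulli}(\alpha_e)$ under $\mu_{\bm{\alpha}}$, the strong law of large numbers --- equivalently Birkhoff's ergodic theorem applied to the ergodic measure $\mu_{\bm{\alpha}}$ and the continuous (hence integrable) potential $\1_{[e]}$ --- yields
\[
\mu_{\bm{\alpha}}\Big(\big\{\omega \in \mathcal E^\mathbb N : \tau_e(\omega) = \textstyle\int \1_{[e]}\,\mathrm d\mu_{\bm{\alpha}} = \alpha_e\big\}\Big) = 1 .
\]
Intersecting these full-measure events over the finitely many $e \in \mathcal E$, the set
\[
\Omega_{\bm{\alpha}} \defeq \{\omega \in \mathcal E^\mathbb N : \tau_e(\omega) = \alpha_e \text{ for all } e \in \mathcal E\}
\]
is a finite intersection of sets of $\mu_{\bm{\alpha}}$-measure one, so $\mu_{\bm{\alpha}}(\Omega_{\bm{\alpha}}) = 1$ and in particular $\Omega_{\bm{\alpha}} \neq \emptyset$. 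Choosing any $\omega \in \Omega_{\bm{\alpha}}$ and setting $(w,x) = \pi(\omega) \in [0,1]^2$ then exhibits a point of $F(\bm{\alpha})$ directly from the definition \eqref{q:Falpha}.

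I do not anticipate a genuine obstacle: the only points needing care are that $\mu_{\bm{\alpha}}$ really is a Bernoulli measure (immediate from $\alpha_e > 0$ and $\sum_e \alpha_e = 1$) and that a finite intersection of full-measure sets is again of full measure, hence non-empty. Should one wish to avoid probability theory, the same conclusion follows from an explicit block construction of $\omega$ in which the number of occurrences of each digit $e$ in the first $n$ symbols is forced to equal $\lfloor n\,\alpha_e \rfloor$ up to a bounded error, making the running frequencies converge to $\alpha_e$; but the ergodic-theoretic route is shorter and foreshadows the measures that will realize the dimension in Theorem~\ref{t:main2}.
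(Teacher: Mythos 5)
Your argument is correct, but it takes a genuinely different route from the paper. You invoke the $\bm{\alpha}$-Bernoulli measure and Birkhoff's ergodic theorem (equivalently the strong law of large numbers) to conclude that the set of sequences with the prescribed frequencies has full $\mu_{\bm{\alpha}}$-measure, hence is non-empty; the paper instead carries out exactly the ``explicit block construction'' you mention only in passing: it orders $\mathcal E$, defines the sets $E_n = \{ e \in \mathcal E : \lfloor n\alpha_e \rceil = \lfloor (n-1)\alpha_e\rceil + 1\}$, concatenates their elements to build a single sequence $\omega$, and checks by a direct counting estimate (the partial counts differ from $n\alpha_e$ by at most $1+\mathfrak m$) that $\tau_e(\omega)=\alpha_e$ for all $e$. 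Your route is shorter and, as you note, foreshadows the role of $\mu_{\bm{\alpha}}$ in the dimension computation (indeed the paper itself uses the full-measure statement $\mu_{\bm{\alpha}}(\pi^{-1}F(\bm{\alpha}))=1$ later in the proof of Theorem~\ref{t:main2}). What the paper's explicit construction buys is reusability: the same $E_n$-machinery is recycled almost verbatim in the proof of Proposition~\ref{Walpha}, where one must build a sequence with prescribed frequencies \emph{subordinate to a fixed first-coordinate sequence} $\bm{j}'$ — a constraint under which the Bernoulli/ergodic argument does not directly apply. So your proof is a valid and economical replacement for Proposition~\ref{SpectrumFull2D} in isolation, but the deterministic construction is not redundant in the paper as a whole.
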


\begin{proof}
Fix a frequency vector $\bm{\alpha} =(\alpha_e)_{e \in \mathcal E} \in [0,1]^{\# \mathcal E}$. It is sufficient to construct a sequence $\omega = (\omega_n)_{n \ge 1} \in \mathcal E^\mathbb N$ such that $\tau_e(\omega)=\alpha_e$ for each $e \in \mathcal E$ since then $\pi(\omega) \in F(\bm{\alpha})$. Denote by $\lfloor\cdot\rceil$ the nearest-integer function. Order the elements in $\mathcal E$ by setting $(j,k) \prec (j',k')$ if either $j < j'$ or if $j=j'$ and $k<k'$. For each $n \ge 1$, set
\[ E_n = \left\{ e \in \mathcal E \, : \, \lfloor n \alpha_e \rceil = \lfloor (n-1) \alpha_e \rceil +1 \right\} = \left\{ e_{n,1} \prec \cdots \prec e_{n,m_n} \right\},\]
where $E_n$ can be empty and thus $m_n=0$ for some $n$. Define $\omega \in \mathcal E^\mathbb N$ by setting for each $n \ge 1$ and $1 \le m \le m_n$,
\[ \omega_{m+ \sum_{i=1}^{n-1} m_i} = e_{n,m},\]
where we let $\sum_{i=1}^0 m_i=0$. Clearly, there are infinitely many $n$ for which $E_n \neq \emptyset$ so $\omega$ is well defined.

\medskip
Now observe that for each $n$ the number of terms of $\omega$ we have defined using $\bigcup_{i=1}^n E_i$ is
\[ \sum_{i=1}^n m_i = \sum_{e\in \mathcal E}\lfloor n\alpha_e \rceil.\]
Since $\sum_{e\in \mathcal E}n\alpha_e=n$ and $\mathfrak m:=\# \mathcal E<\infty$, we must have
\[ n- \mathfrak m \le \sum_{i=1}^n m_i \le n+\mathfrak m.\]
Thus for each $e \in \mathcal E$,
\[ \begin{split}
\frac{\#\{0\le m <n\, :\, \omega_m=e\}}{n} \le \ & \frac{\#\{0\le m < \mathfrak m + \sum_{i=1}^n m_i \, :\, \omega_m=e\}}{n}\\
\le \ & \frac{\#\{0\le m < \sum_{i=1}^n m_i \, : \, \omega_m=e\} + \mathfrak m }{n}\\
\le \ & \frac{n \alpha_e +1 + \mathfrak m }{n}.
\end{split}\]
Similarly, it holds that
\[ \frac{\#\{0\le m <n:\omega_m=e\}}{n} \ge  \frac{n \alpha_e -1-\mathfrak m}{n}.\]
Taking the limit as $n \to \infty$ yields $\tau_e(\omega) = \alpha_e$ for all $e \in \mathcal E$.
\end{proof}

For a fixed frequency vector $\bm{\alpha}$, we would like to determine the set $W(\bm{\alpha})$ of points $y\in [0,1]$ for which there exists an $x\in [0,1]$ such that the point $(x,y)$ has digit frequencies given by $\bm{\alpha}$, see \eqref{q:walphadef}. Recall the definition of the Borel measure $\nu_{\bm\alpha}=\mu_{\bm\alpha}\circ\pi_2^{-1}$ from the introduction. We have the following result.

\begin{prop}\label{Walpha}
Let $\bm{\alpha} = (\alpha_e)_{e \in \mathcal E} \in [0,1]^{\# \mathcal E}$. Then
$$W({\bm\alpha})=\left\{y\in[0,1]\, :\, \exists \, \omega \in  \pi_2^{-1} \{ y\} \, \text{ s.t. } \,  \sum_{0 \le k < B_j} \tau_{(j,k)}(\omega)= \alpha_j \,\text{ for all } 0 \le j < J\right\}.$$
In particular, $\nu_{\bm\alpha}(W({\bm\alpha}))=1$.
\end{prop}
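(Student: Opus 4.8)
The plan is to establish the claimed set equality by proving the two inclusions, and then to deduce the full-measure statement from the ergodic theorem. Write $\tilde W$ for the set on the right-hand side. The structural fact I will exploit throughout is that $\pi_1$ depends only on the first components of a sequence: if $\omega = ((j_m,k_m))_{m\ge1}$, then $\pi_1(\omega) = \lim_{m\to\infty} f_{j_1}\circ\cdots\circ f_{j_m}(0)$ is unchanged when the $k_m$ are altered. This freedom to modify the second components without disturbing the fibre coordinate $w$ is what drives the whole argument.

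For the inclusion $W(\bm\a) \subseteq \tilde W$, suppose $w \in W(\bm\a)$. Then $F_w(\bm\a) \neq \emptyset$, so there are $x \in [0,1]$ and $\omega \in \pi^{-1}\{(w,x)\}$ with $\tau_e(\omega) = \alpha_e$ for all $e \in \mathcal E$. Since the first coordinate of $\pi(\omega)$ is $\pi_1(\omega) = w$, we have $\omega \in \pi_1^{-1}\{w\}$, and summing the identities $\tau_{(j,k)}(\omega) = \alpha_{(j,k)}$ over $k$ gives $\sum_{0\le k<B_j}\tau_{(j,k)}(\omega) = \alpha_j$ for each $j$. Hence $w \in \tilde W$.

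The reverse inclusion is the main step. Let $w \in \tilde W$, witnessed by $\omega = ((j_m,k_m))_{m\ge1} \in \pi_1^{-1}\{w\}$ with $\sum_{0\le k<B_j}\tau_{(j,k)}(\omega) = \alpha_j$ for all $j$; note this says precisely that the symbol $j$ has frequency $\alpha_j$ in the sequence $(j_m)_{m\ge1}$. The idea is to keep $(j_m)$ fixed, so that $\pi_1$ is preserved, and to reassign the second components so as to repair the individual digit frequencies. Since $\bm\a$ is a frequency vector, $\alpha_j>0$, so for each $j$ the position set $P_j = \{m : j_m = j\}$ is infinite with density $\alpha_j$. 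For each $j$ I apply the nearest-integer construction of Proposition~\ref{SpectrumFull2D} to the frequency vector $(\alpha_{(j,k)}/\alpha_j)_{0\le k<B_j}$ on the alphabet $\{0,\dots,B_j-1\}$, obtaining a sequence in which the symbol $k$ has frequency $\alpha_{(j,k)}/\alpha_j$, and I place these symbols, in order, as the new second components $k_m'$ at the positions of $P_j$. Writing $\omega' = ((j_m,k_m'))_{m\ge1}$, a two-layer frequency count---the density $\alpha_j$ of $P_j$ in $\mathbb N$ times the conditional density $\alpha_{(j,k)}/\alpha_j$ of $k$ within $P_j$---yields $\tau_{(j,k)}(\omega') = \alpha_{(j,k)}$ for every $(j,k)$. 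Since $(j_m)$ is unchanged, $\pi_1(\omega') = w$, so $\pi(\omega') = (w,x)$ for some $x$; thus $\omega' \in \pi^{-1}\{(w,x)\}$ has the prescribed frequencies, whence $x \in F_w(\bm\a)$ and $w \in W(\bm\a)$.

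Finally, for $\nu_{\bm\a}(W(\bm\a)) = 1$, consider $G = \{\omega \in \mathcal E^\mathbb N : \tau_e(\omega) = \alpha_e \text{ for all } e \in \mathcal E\}$. By the strong law of large numbers for the Bernoulli measure $\mu_{\bm\a}$ (equivalently, by ergodicity and Birkhoff's theorem) we have $\mu_{\bm\a}(G) = 1$, and every $\omega \in G$ satisfies $\sum_{0\le k<B_j}\tau_{(j,k)}(\omega) = \alpha_j$, so by the characterization just proved $\pi_1(\omega) \in W(\bm\a)$. Hence $G \subseteq \pi_1^{-1}(W(\bm\a))$ and $\nu_{\bm\a}(W(\bm\a)) = \mu_{\bm\a}(\pi_1^{-1}(W(\bm\a))) \ge \mu_{\bm\a}(G) = 1$. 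I expect the main obstacle to lie in the bookkeeping of the reverse inclusion: confirming that reassigning the second coordinates along each fibre $P_j$ produces exactly the frequencies $\alpha_{(j,k)}$ while leaving $\pi_1$ untouched, together with the minor measurability point needed to ensure $\pi_1^{-1}(W(\bm\a))$ is $\mu_{\bm\a}$-measurable so that $\nu_{\bm\a}(W(\bm\a))$ is well defined.
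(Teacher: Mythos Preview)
Your proposal is correct and follows essentially the same approach as the paper: the paper also establishes the reverse inclusion by fixing the first-coordinate sequence $(j_m)$, applying the nearest-integer construction of Proposition~\ref{SpectrumFull2D} to the normalized vector $(\alpha_{(j,k)}/\alpha_j)_k$ for each $j$, weaving the resulting sequences into the positions where $j_m=j$, and then computing the digit frequency as the product of the density of $P_j$ with the conditional density along it; the full-measure statement is likewise obtained from $\{\omega:\tau_e(\omega)=\alpha_e\ \forall e\}\subseteq\pi_1^{-1}(W(\bm\alpha))$ and the ergodic theorem. The only differences are cosmetic: the paper carries out the two-layer frequency bookkeeping explicitly via the counting function $\tilde\tau_{\bm j'}(j,n)$ and also treats the case $\alpha_j=0$ (which cannot occur for a frequency vector as defined here), so your appeal to $\alpha_j>0$ is justified.
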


\begin{proof}$ $\medskip

\noindent $(\subset)$ Set
\[ W = \left\{y\in[0,1]\, :\, \exists \, \omega \in  \pi_2^{-1} \{ y\} \, \text{ s.t. } \,  \sum_{0 \le k < B_j} \tau_{(j,k)}(\omega)= \alpha_j \,\text{ for all } 0 \le j < J\right\}.\]
First, let $y \in W(\bm{\alpha})$. This means that $F_y({\bm\alpha})\neq\emptyset$ so there exists an $\omega\in\mathcal E^\mathbb N$ such that $\pi(\omega)=(x,y)$ for some $x\in[0,1]$ and $\tau_e(\omega)=\alpha_e$ for all $e\in\mathcal E$. Consequently, we find that
$$\sum_{0\leq k<B_j}\tau_{(j,k)}(\omega)=\sum_{0\leq k<B_j}\alpha_{(j,k)} = \alpha_j$$ for all $0 \le j < J$ and so $y \in W$.

\medskip
\noindent $(\supset)$ Conversely, let $y\in W$.
Then there exists an $\omega' = ( j_\ell', k_\ell')_{\ell \ge 1} \in  \pi_1^{-1} \{ y\}$ for which it holds that $\sum_{0 \le k < B_j} \tau_{(j,k)}(\omega')= \alpha_j$ for all $0 \le  j < J$.
Write $\zeta=(j_\ell')_{\ell \ge 1} \in \{0, \ldots, J-1\}^\mathbb N$. For each $0 \le j < J$ and $n \ge 1$, set
\[ E_n^{(j)}:= \left\{ (j,k)\in\mathcal E \, :\, \left\lfloor \frac{n\alpha_{(j,k)}}{\alpha_j}\right\rceil=\left\lfloor\frac{(n-1)\alpha_{(j,k)}}{\alpha_j}\right\rceil+1\right\}=\left\{e_{n,1}^{(j)}\prec\dotsb\prec e_{n,m_n^{(j)}}^{(j)}\right\}\] 
and let $\omega^{(j)} \in \mathcal E^\mathbb N$ be the sequence obtained from concatenating all elements from the sets $E_n^{(j)}$ as in Proposition~\ref{SpectrumFull2D}, so
\[ \omega^{(j)} = e_{1,1}^{(j)} \cdots e_{1,m_1^{(j)}}^{(j)} e_{2,1}^{(j)} \cdots e_{2,m_2^{(j)}}^{(j)} e_{3,1}^{(j)} \cdots.\]
Then as in Proposition~\ref{SpectrumFull2D}, we obtain that for each $0 \le k < B_j$,
\[ \lim_{n \to \infty} \frac{\# \{ 1 \le m \le n \, : \, \omega^{(j)}_m = (j,k) \} }{n} = \frac{\alpha_{(j,k)}}{\alpha_j}.\]
We now weave the sequences $\omega^{(j)}$ together to construct a sequence $\omega = (j_\ell', k_\ell)_{\ell \ge 1} \in \mathcal E^\mathbb N$ that satisfies $\tau_e(\omega) = \alpha_e$ for all $e \in \mathcal E$. Then $\pi(\omega) = (x,y)$ for some $x \in F_y(\bm{\alpha})$, which shows that $F_y(\bm{\alpha}) \neq \emptyset$. For each $\ell \ge 1$, let $\omega_\ell$ be the $\tau_{j_\ell'}(\zeta,\ell)$-th element of the sequence $\omega^{(j_\ell')}$. So, $\omega_1 = \omega^{(j'_1)}_1  = e_{1,1}^{(j'_1)}$, $\omega_2$ either equals $\omega^{(j_1')}_2$ if $j'_1=j'_2$ or $\omega^{(j_2')}_1$ if $j'_1 \neq j'_2$, et cetera. As the sequences in the first coordinates of $\omega = (j_\ell', k_\ell)_{\ell \ge 1}$ and $\omega' = (j_\ell', k_\ell')_{\ell \ge 1}$ coincide, we have for each $0 \le j < J$,
\[ \sum_{0 \le k < B_j} \tau_{(j,k)}(\omega) = \sum_{0 \le k < B_j} \tau_{(j,k)}(\omega')= \alpha_j. \]
Moreover, for each $e = (j,k) \in \mathcal E$ and $n \ge 1$,
\begin{equation}\label{q:tildetau}
 \# \{ 1 \le m \le n \, : \, \omega_m =e \} = \# \{ 1 \le m \le \tau_j(\zeta,n) \, : \, \omega^{(j)}_m = e\}.
\end{equation}
If $\alpha_j>0$, then $\tau_j(\zeta,n) >0$ for all $n$ large enough and for any $e = (j,k) \in \mathcal E$, we obtain
\[  \tau_e(\omega) = \lim_{n \to \infty} \frac{\# \{ 1 \le m \le \tau_j(\zeta,n) \, : \, \omega^{(j)}_m = e\} }{\tau_j(\zeta,n)} \cdot \frac{\tau_j(\zeta,n)}{n} =  \frac{\alpha_{(j,k)}}{\alpha_j}  \sum_{0 \le k < B_j} \tau_{(j,k)}(\omega) = \alpha_{(j,k)}.\]
If $ \alpha_j=0$, then $\alpha_{(j,k)}=0$ for each $0 \le k < B_j$ and by \eqref{q:tildetau},
\[ 0 \le \tau_{(j,k)}(\omega) \le \lim_{n \to \infty} \frac{\tau_j(\zeta,n)}{n} = \alpha_j =0.\]
This gives the first part of the statement. 

\medskip
As $\left\{ \omega \in \mathcal E^\mathbb N \, : \, \tau_e(\omega) = \alpha_e \text{ for all } e \in \mathcal E \right\} \subseteq \pi_1^{-1}(W(\bm{\alpha}))$, it follows from the definition of $\mu_{\bm{\alpha}}$ that $\nu_{\bm\alpha}(W({\bm\alpha}))=1$.
\end{proof}

\subsection{The Hausdorff dimension of the Besicovitch-Eggleston Sets}
In this section, we prove Theorem~\ref{t:main4}. 
The proof is similar to \cite[Theorem~3.1]{BI09} and \cite[Theorem~1.1]{FLMW10}, which both treat digit frequencies for expansions with infinite digit sets that can be generated by an IFS on $\mathbb R$ as in \eqref{q:singlegls}. Their results do not apply to our setting because the IFS $\{ h_e:[0,1] \to [0,1]\}_{e \in \mathcal E}$ on $\mathbb R$ is not of this type. Nevertheless, since we have a finite digit set, we can adapt the method of proof from \cite[Theorem~3.1]{BI09}.\medskip

\medskip
Fix a $y \in [0,1]$. Recall the definition of the fibre fundamental intervals $\Delta_y (k_1, \ldots, k_m)$ from \eqref{q:fibcyls}. Note that we obtain a semi-algebra of sets generating the Borel $\sigma$-algebra $\mathcal B([0,1])$ on $[0,1]$ by taking the collection of all intervals (open, closed and half-open) that can be formed by the endpoints of the fibre fundamental intervals. Suppose that the frequency vector $\bm{\alpha}$ satisfies the following additional property: for each $0 \le j < J$ there are $k, \ell \in B_j$ with $k \neq \ell$ and $\alpha_{(j,k)}>0$ and $\alpha_{(j, \ell)}>0$. Let $m_{y,{\bm\alpha}}$ be the measure on $([0,1], \mathcal B([0,1]))$ determined by
$$m_{y,{\bm\alpha}}(\Delta_y(k_1, \ldots, k_m))=\prod_{i=1}^m \frac{\alpha_{(j_i,k_i)}}{\alpha_{j_i}}, \quad 0 \le k_i < B_{j_i}, \, 1 \le i \le m, \, m \ge 1,$$
and by the same quantity for any interval determined by the same endpoints. This immediately implies that any endpoint $x$ of a fibre fundamental interval has $m_{y,{\bm\alpha}}(\{x\})=0$. If $x\in [0,1]$ is not an endpoint of a fibre fundamental interval, then there is a sequence $(k_m)_{m \ge 1}$ such that
\[ \bigcap_{m \in \mathbb N} \Delta_y(k_1, \ldots, k_m) = \{x\}.\]
This implies that
\[m_{y,{\bm\alpha}}(\{x\}) = \lim_{m \to \infty} \prod_{i=1}^m \frac{\alpha_{(j_i,k_i)}}{\alpha_{j_i}}.\]
By the additional assumption on $\bm{\alpha}$ there is a constant $0< c <1$ such that $\frac{\alpha_{j,k}}{\alpha_j}<c <1$ for all $(j,k) \in \mathcal E$. Therefore, $\mu_{y,\bm{\alpha}}(\{x\})=0.$  We will need the following property of $m_{y,{\bm\alpha}}$.

\begin{lemma}\label{Full}
For $\nu_{\bm\alpha}$-a.e.~$y\in W({\bm\alpha})$, it holds that $m_{y,{\bm\alpha}}(F_y({\bm\alpha}))=1$.
\end{lemma}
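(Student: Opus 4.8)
The plan is to recognise $m_{w,\bm\a}$ as the conditional measure of $\mu_{\bm\a}$ along the fibres of $\pi_1$ and then to transport the full-measure frequency statement for $\mu_{\bm\a}$ down to almost every fibre by disintegration. Introduce the factor map $\Pi:\cE^\N\to\J^\N$ sending $\w=(j_m,k_m)_{m\ge1}$ to its first-coordinate sequence $(j_m)_{m\ge1}$, and let $\phi:\J^\N\to[0,1]$ be the coding map of the driving system $\mathcal F$, so that $\pi_1=\phi\circ\Pi$. Since $\mu_{\bm\a}$ is the $\bm\a$-Bernoulli measure, $\Pi_*\mu_{\bm\a}=\P_{\bm\a}$ and $\phi_*\P_{\bm\a}=\nu_{\bm\a}$; because $\mathcal F$ satisfies the SOSC, $\phi$ is injective off a countable set, and (under the standing hypothesis $\alpha_j>0$, as in Theorem~\ref{dimHFw}) the measure $\nu_{\bm\a}$ is non-atomic, so this countable exceptional set is $\nu_{\bm\a}$-null. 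Disintegrating $\mu_{\bm\a}$ over $\pi_1$ thus yields, for $\nu_{\bm\a}$-a.e.\ $w$, a conditional probability measure $\mu^w_{\bm\a}$ supported on the fibre $\pi_1^{-1}\{w\}$, which for such $w$ is exactly the set of sequences whose first-coordinate sequence equals the unique coding $\bm j(w)=(j_m)_{m\ge1}$ of $w$.

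Next I would identify this conditional with $m_{w,\bm\a}$. As $\mu_{\bm\a}$ is a product measure, $\mu^w_{\bm\a}$ is the product measure under which the coordinates $(k_m)_{m\ge1}$ are independent with $\P(k_m=k)=\alpha_{(j_m,k)}/\alpha_{j_m}$. Writing $\pi_2$ for the second component of $\pi$, the restriction $\pi_2|_{\pi_1^{-1}\{w\}}$ sends a sequence with first coordinates $\bm j(w)$ and second coordinates $(k_m)$ to the point $x=\lim_m h_{(j_1,k_1)}\circ\cdots\circ h_{(j_m,k_m)}(0)\in\Delta_w(k_1,\dots,k_m)$. Comparing masses of cylinders, $(\pi_2)_*\mu^w_{\bm\a}$ and $m_{w,\bm\a}$ both assign mass $\prod_{i=1}^m\alpha_{(j_i,k_i)}/\alpha_{j_i}$ to each fibre fundamental interval $\Delta_w(k_1,\dots,k_m)$. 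Since finite disjoint unions of these intervals form an algebra generating $\mathcal B([0,1])$, the two probability measures agree on a generating algebra and hence coincide: $(\pi_2)_*\mu^w_{\bm\a}=m_{w,\bm\a}$ for $\nu_{\bm\a}$-a.e.\ $w$.

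It then remains to feed in the frequencies. Because $\sigma$ is ergodic for the Bernoulli measure $\mu_{\bm\a}$, the strong law of large numbers gives $\mu_{\bm\a}(G)=1$ for $G:=\{\w\in\cE^\N:\tau_e(\w)=\alpha_e\ \text{for all}\ e\in\cE\}$. The disintegration identity $1=\mu_{\bm\a}(G)=\int\mu^w_{\bm\a}(G)\,\mathrm{d}\nu_{\bm\a}(w)$ together with $0\le\mu^w_{\bm\a}(G)\le1$ forces $\mu^w_{\bm\a}(G)=1$ for $\nu_{\bm\a}$-a.e.\ $w$. Finally, if $\w\in G\cap\pi_1^{-1}\{w\}$ then $\pi(\w)=(w,\pi_2(\w))$ with $\tau_e(\w)=\alpha_e$ for all $e$, so $\pi_2(\w)\in F_w(\bm\a)$ by definition; hence $G\cap\pi_1^{-1}\{w\}\subseteq\pi_2^{-1}(F_w(\bm\a))$ and therefore $m_{w,\bm\a}(F_w(\bm\a))=\mu^w_{\bm\a}\bigl(\pi_2^{-1}F_w(\bm\a)\bigr)\ge\mu^w_{\bm\a}(G)=1$. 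Since $\nu_{\bm\a}(W(\bm\a))=1$ by Proposition~\ref{Walpha}, this is the asserted statement for $\nu_{\bm\a}$-a.e.\ $w\in W(\bm\a)$.

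I expect the main obstacle to be the clean identification $(\pi_2)_*\mu^w_{\bm\a}=m_{w,\bm\a}$ rather than the ergodic input. This step requires controlling two countable (hence null) nuisance sets: the $w$ with non-unique first-coordinate coding under $\phi$, and the shared endpoints of the fibre fundamental intervals where $\pi_2|_{\pi_1^{-1}\{w\}}$ fails to be injective. Making the disintegration of the product measure $\mu_{\bm\a}$ along $\pi_1$ fully rigorous (via the factorisation $\pi_1=\phi\circ\Pi$ and the a.e.-bijectivity of $\phi$) is the technical heart; once it is in place, the remainder is the strong law of large numbers and a routine measure-pushing argument. A direct alternative, avoiding formal disintegration, would instead show by the strong law applied along each subsequence $\{m:j_m=j\}$ that for $m_{w,\bm\a}$-a.e.\ $x$ the natural coding $\w=(j_m,k_m)$ satisfies $\tau_{(j,k)}(\w)=\tfrac{\alpha_{(j,k)}}{\alpha_j}\cdot\alpha_j=\alpha_{(j,k)}$, exactly as in the frequency computation of Proposition~\ref{Walpha}.
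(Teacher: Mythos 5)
Your argument is correct and follows essentially the same route as the paper: both identify $\{m_{w,{\bm\a}}\}_w$ as a disintegration of $\mu_{\bm\a}\circ\pi^{-1}$ over $\nu_{\bm\a}$ by matching masses on cylinders/fibre fundamental intervals, and then combine the resulting Fubini-type identity with the ergodic theorem for the Bernoulli measure. The paper merely verifies the integral identity directly on cylinders and concludes by a short contradiction argument, dispensing with your explicit Rokhlin-disintegration and conditional-product-measure bookkeeping.
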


\begin{proof}
	Observe that for each $(j,k)\in\mathcal E$ by Proposition~\ref{Walpha},
	\begin{equation} \begin{split}\label{CondMeas}
		\int_{[0,1]}\int_{[0,1]} & \mathbbm 1_{\pi([(j,k)])}(x,y)\,\mathrm{d}m_{y,{\bm\alpha}}(x)\,\mathrm{d}\nu_{\bm\alpha}(y)\\
            &=\int_{[\sum_{i=0}^{j-1}p_i, \sum_{i=0}^j p_i]}
            \int_{[0,1]}\mathbbm 1_{\Delta_y(k)}(x)\,\mathrm{d}m_{y,{\bm\alpha}}(x)\,\mathrm{d}\nu_{\bm\alpha}(y)\\
            &= \frac{\alpha_{(j,k)}}{\alpha_j} 
            \nu_{\bm\alpha}\left(\left[\sum_{i=0}^{j-1}p_i, \sum_{i=0}^j p_i\right] \right)\\
	    &=\alpha_{(j,k)} =\int_{[0,1]^2}\mathbbm 1_{\pi([(j,k)])}\,\mathrm{d}\mu_{\bm\alpha}\circ\pi^{-1}.
	\end{split}\end{equation}
	Since the collection $\{\pi([e_1, \ldots , e_n]): e_i\in\mathcal E, \,\, 1 \le i \le n \}$ generates the Borel $\sigma$-algebra $\mathcal B([0,1]^2)$, we can conclude from \eqref{CondMeas} that
 \begin{equation}\label{q:equalf}
 \int_{[0,1]} \int_{[0,1]} f \, \mathrm{d}m_{y,{\bm\alpha}}\,\mathrm{d}\nu_{\bm\alpha} = \int_{[0,1]^2} f \, \mathrm{d}\mu_{\bm\alpha}\circ\pi^{-1} 
 \end{equation}
for all $f\in L^1([0,1]^2,\mathcal B([0,1]^2),\mu_{\bm\alpha}\circ\pi^{-1})$.

\medskip
Let $E := \{ y \in W(\bm{\alpha}) \, : \, m_{y, \bm{\alpha}} (F_y(\bm{\alpha})) < 1 \}$ and suppose that $\nu_{\bm{\alpha}} (E) >0$. From \eqref{q:equalf} with $f=\mathbbm 1_{F({\bm\alpha})}$ together with Proposition~\ref{Walpha}, we then find that
	\begin{align*}
		\mu_{\bm\alpha}\circ\pi^{-1}(F({\bm\alpha}))
						&=\int_{W({\bm\alpha})}\int_{[0,1]}\mathbbm 1_{F_y({\bm\alpha})}(x)\,\mathrm{d}m_{y,{\bm\alpha}}(x)\,\mathrm{d}\nu_{\bm\alpha}(y)\\
						&=\int_{W({\bm\alpha})\setminus E}1\,\mathrm{d}\nu_{\bm\alpha}
                            +\int_E\int_{[0,1]}\mathbbm 1_{F_y({\bm\alpha})}(x)\,\mathrm{d}m_{y,{\bm\alpha}}(x)\,\mathrm{d}\nu_{\bm\alpha}(y)\\
                            &<\int_{W({\bm\alpha})}1\,\mathrm{d}\nu_{\bm\alpha}\\
						&=1.
	\end{align*}
On the other hand, recall that $E_{\mathbbm 1}(\bm{\alpha})$ is the symbolic Besicovitch-Eggleston set containing all sequences $\omega \in \mathcal E^\mathbb N$ with $\tau_e (\omega)=\alpha_e$ for each $e \in \mathcal E$. Therefore, by the definition of $\mu_{\bm{\alpha}}$,
\[ 1 = \mu_{\bm{\alpha}}(E_{\mathbbm 1}(\bm{\alpha})) \le \mu_{\bm{\alpha}} \circ \pi^{-1} (F(\bm{\alpha})) < 1.\]
This gives a contradiction. It follows that $m_{y,{\bm\alpha}}(F_y({\bm\alpha}))=1$ for $\nu_{\bm\alpha}$-a.e.~$y\in W({\bm\alpha})$.
\end{proof}


Before we move to the proof of Theorem~\ref{t:main4}, to simplify notation we put $p_{(j,k)}=p_j$ for all $(j,k)\in\mathcal E$. Also, let $\mathbb P_{\bm{\alpha}}$ be the $(\alpha_j)$-Bernoulli measure on $\{0, \ldots, J-1\}^\mathbb N$.

\begin{proof}[Proof of Theorem~\ref{t:main4}]
Fix a $y \in W(\bm{\alpha})$. Recall that the lower pointwise dimension of $m_{y,{\bm\alpha}}$ at the point $x \in [0,1]$ is defined by
$$\underline{d}_{m_{y,{\bm\alpha}}}(x) =\liminf_{r\to0}\frac{\log m_{y,{\bm\alpha}}(B(x,r))}{\log r},$$
 where $B(x,r)$ is the open interval of length $2r$ centred at $x$. One can verify that the collection $\{ \Delta_y(k_1 \cdots k_n) \, : \, n \in \mathbb N\}$ satisfies conditions (CB1)--(CB3) of the Moran-type construction from \cite[Section 15]{Pes97}. Moreover, for any intervals $\Delta_y(k_1 \cdots k_n)$, $\Delta_y(k_1 \cdots k_n, k_{n+1})$, it holds that
\[ \textrm{diam}(\Delta_y(k_1 \cdots k_n)) \le (\max_{e \in \mathcal E} q_e )^n,  \quad (\min_{e \in \mathcal E} q_e) \cdot\textrm{diam}(\Delta_y(k_1 \cdots k_n)) \le \textrm{diam}(\Delta_y(k_1 \cdots k_n k_{n+1})). \]
Therefore, by e.g.~\cite[Theorem~15.3(1)]{Pes97}, we can replace the balls $B(x,r)$ in the definition of $\underline{d}_{m_{y,{\bm\alpha}}}$ with the fibre fundamental intervals $ \Delta_y (k_1, \ldots, k_n)$ to obtain an upper bound for $\underline{d}_{m_{y,{\bm\alpha}}}(x)$ for all $x\in F_y(\bm{\alpha})$ and a lower bound for $m_{y,{\bm\alpha}}$-a.e.~$x\in[0,1]$ in the case that $m_{y,\bm{\alpha}}(F_y(\bm{\alpha}))=1$. To be more precise, for $x\in F_y(\bm{\alpha})$ with $\omega = ((j_\ell,k_\ell))_{\ell\ge 1} \in \pi^{-1}\{(x,y)\}$ that have $\tau_e(\omega)= \alpha_e$ for each $e \in \mathcal E$, we find that
	\begin{align*}
		\underline{d}_{m_{y,{\bm\alpha}}}(x)	&\leq\lim_{n\to\infty}\frac{\log m_{y,{\bm\alpha}}(\Delta_y(k_1\dotsb k_n))}{\log \textrm{diam}(\Delta_y(k_1\dotsb k_n))}\\
								&=\lim_{n\to\infty}\frac{\log\prod_{1\leq\ell\leq n}\frac{\alpha_{(j_\ell,k_\ell)}}{\alpha_{j_\ell}}}{\log\prod_{1\leq\ell\leq n}q_{(j_\ell,k_\ell)}}\\
								&=\lim_{n\to\infty}\frac{\frac1n\sum_{1\leq\ell\leq n}\log\alpha_{(j_\ell,k_\ell)}-\frac1n\sum_{1\leq\ell\leq n}\log\alpha_{j_\ell}}
				{\frac1n\sum_{1\leq\ell\leq n}\log q_{(j_\ell,k_\ell)}}.
	\end{align*}
	By collecting like terms, we find that
	\begin{align*}
		\sum_{1\leq\ell\leq n}\log\alpha_{(j_\ell,k_\ell)}	&=\sum_{e\in\mathcal E}\#\{1 \le i \le n \, : \, \omega_i=e\}\log\alpha_e,\\
		\sum_{1\leq\ell\leq n}\log\alpha_{j_\ell}	        &=\sum_{0 \le j < J}\#\{1 \le i \le n \, : \, j_i=j\}\log\alpha_j,\\
		\sum_{1\leq\ell\leq n}\log q_{(j_\ell,k_\ell)}	&=\sum_{e\in\mathcal E}\#\{1 \le i \le n \, : \, \omega_i=e\}\log q_e.
	\end{align*}
Since $x\in F_y({\bm\alpha})$, we have for each $e \in \mathcal E$ and $0 \le j < J$ that
\[ \begin{split}
\lim_{n\to\infty}\frac{\#\{1 \le i \le n \, : \, \omega_i=e\}}n=\ & \tau_e(\omega)=\alpha_e,\\
\lim_{n\to\infty}\frac{\#\{1 \le i \le n \, : \, j_i=j\}}n=\ & \lim_{n\to\infty}\sum_{0\leq k<B_j}\frac{\#\{1\leq i\leq n:\omega_{(j_i,k)}=(j,k)\}}n=\alpha_j.
\end{split}\]
Thus, recalling the definition of measure-theoretic entropy from \eqref{q:entropy}, we find that
 $$\underline{d}_{m_{y,{\bm\alpha}}}(x)\leq\frac{\sum_{e\in\mathcal E}\alpha_e\log\alpha_e-\sum_{0 \le j < J}\alpha_j\log\alpha_j}{\sum_{e\in\mathcal E}\alpha_e\log q_e}=\frac{h_{\mu_{\bm\alpha}}(\sigma)-h_{\mathbb P_{\bm\alpha}}(\sigma_J)}{-\sum_{e \in \mathcal E} \alpha_e \log q_e}$$
	for all $x\in F_y({\bm\alpha})$. Therefore, it follows from Lemma~\ref{l:pwdim}(i) that
$$\dim_{\mathrm{H}}(F_y({\bm\alpha}))\leq\frac{h_{\mu_{\bm\alpha}}(\sigma)-h_{\mathbb P_{\bm\alpha}}(\sigma_J)}{-\sum_{e \in \mathcal E} \alpha_e \log q_e}.$$

To prove the second statement, fix $y\in W({\bm\alpha})$ such that $m_{y,\bm{\alpha}}(F_y(\bm{\alpha}))=1$, which holds for $\nu_{\bm{\alpha}}$-a.e.~$y \in W(\bm{\alpha})$ by Lemma~\ref{Full}. Let $(j_m)_{m \ge 1} \in \pi_J^{-1}\{y\}$ be the unique sequence with $\tau_j((j_m)_{m \ge 1}) = \alpha_j$ for each $0 \le j < J$, see Lemma~\ref{l:uniquew}. By the above computations for the upper bound of $\dim_{\mathrm{H}}(F_y({\bm\alpha}))$ together with \cite[Theorem 15.3(2)]{Pes97}, we have for $m_{y,{\bm\alpha}}$-a.e.~$x\in [0,1]$ that
\begin{equation}\label{q:inflim}
\underline{d}_{m_{y,{\bm\alpha}}}(x)	\geq \inf \lim_{n\to\infty}\frac{\log m_{y,{\bm\alpha}}(\Delta_y(k_1\dotsb k_n))}{\log\textrm{diam}(\Delta_y(k_1\dotsb k_n))},
\end{equation}
where the infimum is taken over all sequences $(k_m)_{m \ge 1}$ such that $(j_m,k_m)_{k \ge 1} \in \pi^{-1}\{(x,y)\}$. We have seen that the set of $x$ for which $\#\pi^{-1}\{(x,y)\}>1$ is countable so is therefore a $m_{y,{\bm\alpha}}$-null set. Consequently, the infimum on the right-hand side of \eqref{q:inflim} is over a single sequence for $m_{y,{\bm\alpha}}$-a.e. $x\in[0,1]$. Fix $x\in F_y(\bm{\alpha})$ such that \eqref{q:inflim} and $\#\pi^{-1}\{(x,y)\}=1$ both hold. Then
\begin{align*}
		\underline{d}_{m_{y,{\bm\alpha}}}(x)	&\geq \lim_{n\to\infty}\frac{\log m_{y,{\bm\alpha}}(\Delta_y(k_1\dotsb k_n))}{\log\textrm{diam}(\Delta_y(k_1\dotsb k_n))}\\
	&=\lim_{n\to\infty}\frac{\sum_{e\in\mathcal E}\frac{\#\{1 \le i \le n \, : \, \omega_i=e\}}n\log\alpha_e-\sum_{0 \le j < J}\frac{\#\{1 \le i \le n \, : \, j_i=j\}}n \log\alpha_j}{\sum_{e\in\mathcal E}\frac{\#\{1 \le i \le n \, : \, \omega_i=e\}}n\log q_e}.
	\end{align*}
Since $x \in F_y(\bm{\alpha})$ for each $e \in \mathcal E$ and $0 \le j <J$, we find
\[\begin{split}
    \tau_e(\omega)=\ &\lim_{n\to\infty}\frac{\#\{1 \le i \le n \, : \, \omega_i=e\}}n=\alpha_e \, \, \text{ and }\\
    \tau_j((j_m)_{m \ge 1})=\ & \lim_{n\to\infty}\frac{\#\{1 \le i \le n \, : \, j_i=j\}}n=\alpha_j.
\end{split}\]
Therefore,
  $$\underline{d}_{m_{y,{\bm\alpha}}}(x)\geq\frac{\sum_{e\in\mathcal E}\alpha_e\log\alpha_e-\sum_{0 \le j < J}\alpha_j\log\alpha_j}{\sum_{e\in\mathcal E}\alpha_e\log q_e}=\frac{h_{\mu_{\bm\alpha}}(\sigma)-h_{\mathbb P_{\bm\alpha}}(\sigma_J)}{-\sum_{e \in \mathcal E} \alpha_e \log q_e}.$$
Since this holds for $m_{y,{\bm\alpha}}$-a.e.~$x \in F_y(\bm{\alpha})$ and $m_{y,{\bm\alpha}}(F_y({\bm\alpha}))=1$, it follows from Lemma~\ref{Full} and Lemma~\ref{l:pwdim}(ii) that
\[  \dim_{\mathrm{H}}(F_y({\bm\alpha}))   \geq\dim_{\mathrm{H}}(m_{y,{\bm\alpha}}) \geq\frac{h_{\mu_{\bm\alpha}}(\sigma)-h_{\mathbb P_{\bm\alpha}}(\sigma_J)}{-\sum_{e \in \mathcal E}\alpha_e \log q_e}. \qedhere\]
\end{proof}

\begin{remark}
The additional condition on the frequency vector $\bm{\alpha}$ that for each $0 \le j <J$ there are $k,\ell \in B_j$ with $\alpha_{(j,k)}>0$ and $\alpha_{(j,\ell)}>0$ is used only to remove the infimum in \eqref{q:inflim}. Another assumption that would allow us to remove the infimum is to assume that $\# \pi^{-1}\{(x,y)\}=1$ for all $(x,y) \in F(\bm{\alpha})$. This holds for example in the following cases.
\begin{itemize}
\item[(i)] If $B_j \ge 3$ for some $0 \le j < J$ and there is a $1 \le k\le B_j-2$ with $\alpha_{(j,k)}>0$, then for any $(x,y)$ with $\#\pi^{-1}\{(x,y)\} >1$ and any $\omega \in \#\pi^{-1}\{(x,y)\}$, we obtain $\tau_{(j,1)}(\omega)=0 \neq \alpha_{(j,k)}$ and thus $(x,y) \not \in F(\bm{\alpha})$.
\item[(ii)] If $\alpha_e >0$ for each $e \in \mathcal E$ and $h_{(j,0)}(0)=0$ and $h_{(j,B_j-1)}(1)=1$ for all $0 \le j < J$, then for any $(x,y)$ with $\#\pi^{-1}\{(x,y)\} >1$ any $\omega \in \#\pi^{-1}\{(x,y)\}$ will either end in an infinite string of digits from the set $\{ (j,0) \, : \, 0 \le j < J\}$ or in an infinite string of digits from the set $\{ (j,B_j-1) \, : \, 0 \le j < J\}$ and again there is at least one $e \in \mathcal E$ for which $\tau_e (\omega) = 0 \neq \alpha_e$.
\end{itemize}
\end{remark}

\section{Examples}\label{s:examples}

\begin{ex}
For a concrete example, let $J=2$, $B_0=2$, $B_1=3$ so that $$\mathcal E = \{ (0,0), (0,1), (1,0), (1,1), (1,2)\}.$$ Let
\[ \begin{split}
h_{(0,k)}(x) =\ & \frac{x+k}{2},  \quad k=0,1,\\
h_{(1,k)}(x) =\ & \frac{x+k}{3}, \quad k=0,1,2,
\end{split}\]
so $\varepsilon_e = 0$ for all $e \in \mathcal E$ and $r_{(0,1)}= \frac12$, $r_{(1,1)}=\frac13$ and $r_{(1,2)}= \frac23$. Take $p \in (0,1)$ arbitrary and let $p_0=p$, so $f_0(y) = py$ and $f_1(y) = (1-p)y+p$. This gives
\[ A_{(0,k)} = \left[\begin{array}{cc}
1/2 & 0 \\
0 & p
\end{array}\right], \, A_{(1,k)} = \left[\begin{array}{cc}
1/3 & 0 \\
0 & 1-p
\end{array}\right]\]
and 
\[ \bm{v}_{(0,0)} =  \bm{v}_{(1,0)} = \left[\begin{array}{c}
0 \\
0\end{array} \right], \, \bm{v}_{(0,1)} = \left[\begin{array}{c}
1/2 \\
0\end{array} \right], \, \bm{v}_{(1,1)}  = \left[\begin{array}{c}
1/3 \\
p\end{array} \right] , \, \bm{v}_{(1,2)}  = \left[\begin{array}{c}
2/3 \\
p\end{array} \right].\]
See Figure~\ref{f:2and3}(b) for an illustration of how this GLS IFS $\{ A_e + \bm{v}_e\}_{e \in \mathcal E}$ acts on $[0,1]^2$. For the number expansions, if $(e_m)_{m \ge 1} \in \mathcal E^\mathbb N$, then for each $m \ge 1$ we get $s_m=0$, $K_m=2$ if $j_m=0$ and $K_m=3$ if $j_m=1$ and $t_m \in \{0,\frac12, \frac13, \frac23\}$ for all $m \ge 1$. So, in fact, for each $(e_m)_{m \ge 1} \in \mathcal E^\mathbb N$ if we set $\kappa(n) = \#\{1 \le m \le n \, : \, j_m = 0\}$, then \eqref{q:glsifs} becomes
\[  \lim_{m \to \infty} h_{e_1} \circ \cdots \circ h_{e_m} (0) = \sum_{m \ge 1} \frac{t_m}{2^{\kappa(m)}3^{m-\kappa(m)}}.\]
Hence, this GLS IFS produces for each $x \in [0,1]$ number expansions in mixed base 2 and 3. Note that for this IFS, $h_e \neq h_{e'}$ if $e \neq e'$. So from Proposition~\ref{p:uncountable}, it follows that each $x \in [0,1]$ has uncountably many different expansions with mixed bases 2 and 3. If ${\bm \alpha}$ satisfies the assumption of Theorem \ref{t:main4}, then  we can apply Theorem~\ref{t:main4}. We have $\alpha_0 = \alpha_{(0,0)}+ \alpha_{(0,1)}$ and $\alpha_1 = \alpha_{(1,0)}+ \alpha_{(1,1)}+ \alpha_{(1,2)}$ and obtain
for $\nu_{\bm{\alpha}}$-a.e.~$y \in W(\bm{\alpha})$ that
\[ \dim_{\mathrm H}(F_y(\bm{\alpha})) = \frac{\sum_{e \in \mathcal E} \alpha_e \log \alpha_e - \alpha_0\log \alpha_0 - \alpha_1 \log \alpha_1}{-\alpha_0\log 2 - \alpha_1 \log 3}. \]

\medskip
Note that if we consider the IFS $\{ A_{(0,0)} + \bm{v}_{(0,0)}, A_{(1,2)} + \bm{v}_{(1,2)} \}$ with $A_{(j,k)}$ and $\bm{v}_{(j,k)}$ as in the example and we take $\frac12 < p < \frac23$, then we obtain a diagonally affine IFS that satisfies condition (D')(a). Hence, we can apply Theorem~\ref{t:main1} to this IFS to obtain for each $\alpha \in (0,1)$ an expression for the Hausdorff dimension of the set of points $(x,y) \in [0,1]^2$ that have a GLS expansion containing only the digits $(0,0)$ and $(1,2)$ and in which $(0,0)$ occurs with frequency $\alpha$ (and thus $(1,2)$ with frequency $1-\alpha$). Of course, here we can take any other combination of a digit from $\{(0,0), (0,1)\}$ and a digit from $\{(1,0), (1,1), (1,2)\}$ to obtain a similar result.

\medskip
We can extend this example in the following sense. Fix some $J \in \mathbb N_{\ge 2}$ and different integers $M_0,M_1, \ldots, M_{J-1} > J$. Also fix some probability vector $(p_j)_{0 \le j <J}$. So,
\[ \mathcal E = \{ (j,k) \, : \, 0 \le j < J , \, \, 0 \le k < M_j\}. \]
For $(j,k) \in \mathcal E$, set
\[ A_{(j,k)} = \left[\begin{array}{cc}
1/M_j & 0 \\
0 & p_j
\end{array}\right], \quad \bm{v}_{(j,k)} = \left[\begin{array}{c}
k/M_j \\
\sum_{i=0}^{j-1}p_i\end{array} \right].\]
For each $x \in [0,1]$ and sequence $(j_m)_{m \ge 1} \in \{0,1, \ldots, J-1\}^\mathbb N$, the GLS expansion produced by this system has the form
\begin{equation}\label{q:NM}
x = \sum_{m \ge 1} \frac{d_m}{M_0^{c_{0,m}} M_1^{c_{1,m}} \cdots M_{J -1}^{c_{J-1,m}}},
\end{equation}
with $d_m \in \{ 0, \ldots, M_{j_m}-1\}$ and $c_{j,m} = \# \{ 1 \le i \le m \, : \, j_i=j\}$. In other words, the system produces for each $x\in [0,1]$ uncountably many different mixed base expansions with bases $M_0, \ldots, M_{J-1}$. Here, we need to remark that we consider two GLS expansions  produced by the system different if the two corresponding sequences in
\[ \mathcal A = \bigcup_{(j,k) \in \mathcal E}\left\{ \left(0,M_j,\frac{k}{M_j}\right) \right\}  \]
are different. For the point $0$, for example, this means that the GLS expansions generated by the system are all of the expansions of the form
\[ 0=\sum_{m \ge 1} \frac{0}{M_0^{c_{0,m}} \cdots M_{J -1}^{c_{J-1,m}}},\]
with $(c_{0,m}, \ldots, c_{J-1,m}) \in \mathbb N^{J}$ satisfying $\sum_{\ell=0}^{J-1} c_{\ell,m} =m$.
\end{ex}

\begin{ex}
Fix an $N \in \mathbb N_{\ge 3}$ and a $0 < p < 1$ and let $\mathcal E = \{(j,k)\, : \, j=0,1, \, 0 \le k < N\}$. For $0 \le k <N$, set
\[ A_{(0,k)} = \left[\begin{array}{cc}
1/N & 0 \\
0 & p
\end{array}\right], \, A_{(1,k)} = \left[\begin{array}{cc}
-1/N & 0 \\
0 & 1-p
\end{array}\right]\]
and
\[ \bm{v}_{(0,k)} =  \left[\begin{array}{c}
k/N \\
0\end{array} \right], \, \bm{v}_{(1,k)} = \left[\begin{array}{c}
(k+1)/N \\
p\end{array} \right].\]
Then for any $x \in [0,1]$ and any $(j_m)_{m \ge 1} \in \{0,1\}^\mathbb N$, the number expansion of $x$ produced by this system has the form
\[ x = \sum_{m \ge 1} (-1)^{j_m} \frac{d_m}{N^m},\]
for some $d_m \in \{0, \ldots, N-1\}$, $m \ge 1$. So, the system produces for each $x$ a signed base $N$-expansion in which the signs of the terms correspond to a preset sequence of signs $(j_m)_{m \ge 1}$.

\medskip
Also this system satisfies $h_e \neq h_{e'}$ whenever $e \neq e'$ and together with any frequency vector $\bm{\alpha} \in (0,1)^{2N}$ for which the conditions of Theorem~\ref{t:main4} are satisfied, the Hausdorff dimension of the Besicovitch-Eggleston set $F_y(\bm{\alpha})$ for $\nu_{\bm{\alpha}}$-a.e.~$y \in W(\bm{\alpha})$ is given by Theorem~\ref{t:main4}. For $\frac1N < p < \frac{N-1}{N}$ and any $0 \le k,\ell <N$ the system $\{ A_{(0,k)} + \bm{v}_{(0,k)}, A_{(1,\ell)} + \bm{v}_{(1,\ell)} \}$ satisfies (D')(a), so then also Theorem~\ref{t:main1} applies.
\end{ex}

\def\cprime{$'$}

\end{document}